\newtheorem{theorem}{Theorem}[section]
\newtheorem{proposition}[theorem]{Proposition}
\newtheorem{lemma}[theorem]{Lemma}
\newcommand{\R}{\mathbb R}
\newcommand{\eps}{\varepsilon}
\newcommand{\dd}{\, \mathrm{d}}
\DeclareMathOperator{\dist}{dist}
\numberwithin{equation}{section}
\title[Elliptic eigenvalue problems with rough coefficients]{Optimal domains for elliptic eigenvalue problems with rough coefficients}
\author{Stanley Snelson}
\address{Department of Mathematical Sciences, Florida Institute of Technology, Melbourne, FL 32901}
\email{ssnelson@fit.edu}
\author{Eduardo V. Teixeira}
\address{Department of Mathematics, University of Central Florida, Orlando, FL 32816}
\email{eduardo.teixeira@ucf.edu}
\thanks{SS was partially supported by NSF grant DMS-2213407 and a Collaboration Grant from the Simons Foundation, Award \#855061. The authors would like to thank the anonymous referee, whose detailed comments helped us improve the manuscript.}
\begin{document}

\begin{abstract}
We prove the existence of an open set minimizing the first Dirichlet eigenvalue of an elliptic operator with bounded, measurable coefficients, over all open sets of a given measure. Our proof is based on a free boundary approach: we characterize the eigenfunction on the optimal set as the minimizer of a penalized functional, and derive openness of the optimal set as a consequence of a H\"older estimate for the eigenfunction. We also prove that the optimal eigenfunction grows at most linearly from the free boundary, i.e. it is Lipschitz continuous at free boundary points.
\end{abstract}

\maketitle

\section{Introduction}


This article is concerned with the problem of minimizing the first Dirichlet eigenvalue of an elliptic operator $L = -\nabla \cdot (A(x) \nabla \cdot)$ over all open sets of a given measure. More precisely, for $d\geq 2$, suppose $A(x)$ is a symmetric, $d\times d$ matrix of bounded, measurable coefficients, with constants $0< \theta < \Theta$ such that $|A(x)| \leq \Theta$ and $A_{ij}(x) v_i v_j \geq \theta |v|^2$ for all $x, v\in \R^d$. (More succinctly, $\theta I \leq A(x) \leq \Theta I$.) For any open $\Omega\subset \R^d$, define
\begin{equation}\label{e:lambda}
\lambda_1^A(\Omega) := \inf_{\substack{u\in H_0^1(\Omega)\\u\not\equiv 0}} \frac{\int_{\Omega} \nabla u \cdot(A \nabla u) \dd x}{\int_{\Omega} u^2 \dd x}.
\end{equation}
Our goal is to find $\Omega$ minimizing $\lambda_1^A$ over the class of open sets with volume $1$. The key novelty of our problem is that no regularity assumptions are placed on $A(x)$.

In the simplest case $L = -\Delta$, the classical Faber-Krahn Theorem \cite{faber, krahn} states that for any open set $\Omega$ of finite measure, $\lambda_1(\Omega) \geq \lambda_1(B_\Omega)$, where $B_\Omega$ is a ball with the same volume as $\Omega$. This has been followed by a large (and still growing) literature on existence and regularity of optimal domains for Laplacian eigenvalues, and more generally eigenvalues of elliptic operators, subject to various constraints and boundary conditions. We give some references below in Section \ref{s:related}. 
To the best of our knowledge, every prior result on this topic either (i) imposes a regularity assumption on the coefficients $A(x)$, or (ii) works in the relaxed setting of {\it quasi-open} sets and finds a minimizer in this larger class. The present article is the first to obtain an open minimizer in the general case of bounded, measurable $A(x)$. 

Like many other authors, we use a free boundary approach that is based on a variational characterization of $u_*$, the first eigenfunction of $L$ on the optimal domain $\Omega_*$. When $A(x)$ is sufficiently regular, the Lipschitz class arises naturally as the best global regularity one can expect for $u_*$, since $|\nabla u_*|$ generally has a jump across the boundary of $\Omega_*$. 
In our case, the irregularity of the coefficient matrix $A(x)$ means that $u_*$ is merely H\"older continuous. Therefore, common free boundary techniques that are well-adapted to the Lipschitz regularity class are not applicable, and we have to replace them with arguments that are compatible with H\"older continuous state functions. 
After showing $u_*$ is H\"older and concluding $\Omega_* = \{u_*>0\}$ is open, we prove in addition that $u_*$ satisfies Lipschitz-like growth bounds at the boundary, i.e. we recover the same upper bounds at the boundary that are seen in the Laplacian case. In principle, this is counterintuitive; a solution is more regular along the free boundary then in its set of positivity.

\subsection{Main results} To state our results precisely, we first note that in general, there is nothing to prevent minimizing sequences from escaping to infinity. It is easy to construct examples where this occurs: e.g. let $B_n$ be a sequence of disjoint balls with volume 1 centered at points $x_n$ with $|x_n|\to \infty$, and let $A(x) = (1+1/n)I$ in $B_n$ and $2I$ elsewhere, where $I$ is the $d\times d$ identity matrix. 
For such $A(x)$, the infimum of $\lambda_1^A$ over open sets of volume 1 is not achieved. To prevent this situation, we could fix a large open ball $B\subset \R^d$ with $\mathcal L^d(B)\gg 1$, where $\mathcal L^d$ is Lebesgue measure, and optimize over subsets of $B$. In fact, our analysis holds true if $B\subset \R^d$ is any bounded open set with smooth boundary and volume greater than 1. For such a $B$, we define our class of admissible sets as
\[ 
\mathcal C := \{\Omega \mbox{ open}, \Omega \subset B,  \mathcal L^d(\Omega) = 1\}. 
\]
We also introduce the useful notation
\[
M_{\theta,\Theta}(B) = \{A \in L^\infty(B, \R^{d\times d}) : A(x) \text{ symmetric}, \, \theta I \leq A(x) \leq \Theta I \text{ in } B.\}.
\]
Our main result is as follows:

\begin{theorem}\label{t:main}
With $A\in M_{\theta,\Theta}(B)$, there exists a set $\Omega_* \in \mathcal C$ minimizing $\lambda_1^A(\Omega)$ over $\mathcal C$. Furthermore, the eigenfunction $u_* \in H_0^1(\Omega_*)$ corresponding to $\lambda_1^A(\Omega_*)$, extended by zero in $B\setminus \Omega_*$,  is locally H\"older continuous in $B$,  and for any compact $K\subset B$,
\[
\|u_*\|_{C^\alpha( K\cap \bar \Omega_*)} \leq C \|u_*\|_{L^2(\Omega_*)},
\]
with $\alpha\in (0,1)$ depending only on $d$, $\theta$, and $\Theta$, and $C>0$ depending on $d$, $\theta$, $\Theta$, $K$, and $\mathcal L^d(B)$.
\end{theorem}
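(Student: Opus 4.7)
The plan is to replace the set-optimization by an unconstrained variational problem for a penalized functional on $H_0^1(B)$, following the free-boundary philosophy of Alt--Caffarelli as adapted to eigenvalue problems. I would introduce
\[
J_\Lambda(u) := \int_B \nabla u \cdot (A\nabla u)\, \dd x + \Lambda\bigl(\mathcal{L}^d(\{u \neq 0\}) - 1\bigr)_+,
\]
minimized over nonnegative $u \in H_0^1(B)$ with $\|u\|_{L^2(B)} = 1$. By uniform ellipticity of $A$, minimizing sequences are bounded in $H_0^1(B)$; weak $H^1$-compactness, Rellich compactness for the $L^2$-constraint, and lower semicontinuity of $u \mapsto \mathcal{L}^d(\{u \neq 0\})$ under a.e.\ convergence yield a minimizer $u_* \geq 0$.

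The main obstacle is the H\"older continuity of $u_*$, which is required both to ensure $\{u_* > 0\}$ is open and to give the stated quantitative estimate; because $A$ is merely measurable, neither pointwise PDE nor blow-up arguments are available, so one must work purely variationally. First, Moser iteration applied to the subsolution inequality $-\nabla\cdot(A\nabla u_*) \leq \lambda_* u_*$ on $B$ (obtained from inner variations, with $\lambda_*$ the natural $L^2$-Lagrange multiplier) yields a uniform bound $\|u_*\|_{L^\infty} \leq C \|u_*\|_{L^2}$. Second, comparing $u_*$ with the $A$-harmonic replacement $v$ on any ball $B_r(x_0) \subset B$ produces the quasi-minimality inequality
\[
\int_{B_r(x_0)} \nabla u_* \cdot (A\nabla u_*)\, \dd x \leq \int_{B_r(x_0)} \nabla v \cdot (A\nabla v)\, \dd x + C r^d,
\]
where the error $C r^d$ is controlled by $\Lambda$, $\|u_*\|_{L^\infty}$, and $\lambda_*$. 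This places $u_*$ in a standard De Giorgi class, and oscillation-decay iteration (which uses only ellipticity of $A$) delivers the claimed H\"older estimate with constants depending only on $d, \theta, \Theta,$ and $\mathcal{L}^d(B)$.

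Once H\"older continuity is in hand, $\Omega_* := \{u_* > 0\}$ is automatically open. It then remains to verify $\mathcal{L}^d(\Omega_*) = 1$ and that $\lambda_1^A(\Omega_*)$ realizes the infimum over $\mathcal{C}$. For $\Lambda$ sufficiently large (depending only on the structural constants), a level-set truncation argument forces $\mathcal{L}^d(\Omega_*) \leq 1$: if the set were strictly larger, shaving a thin layer off the support via replacing $u_*$ with $(u_* - t)_+$ would save more penalty than it costs in Dirichlet energy. The reverse inequality $\mathcal{L}^d(\Omega_*) \geq 1$ follows because any $\Omega \in \mathcal{C}$ together with its first eigenfunction is a zero-penalty competitor for $J_\Lambda$. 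These two comparisons simultaneously identify $u_*$ as the first eigenfunction on $\Omega_*$ and yield $\lambda_1^A(\Omega_*) = \inf_{\mathcal{C}} \lambda_1^A$.
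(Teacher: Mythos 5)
Your approach differs from the paper's in two genuine ways, and it contains one real gap.

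\textbf{Different penalization and different route to H\"older continuity.} The paper uses a two-term penalized functional $F_{\delta,\eps}$ that penalizes \emph{both} $\bigl|\int u^2 - 1\bigr|$ and $(\mathcal{L}^d(\{u\ne 0\})-1)_+$, minimized unconstrained over $H_0^1(B)$, and then proves H\"older continuity via a soft compactness argument: rescalings $v(x) = \kappa u(x_0+rx)-\xi$ with $r^2(1+\kappa^2+\xi^2)$ small are shown to be $L^2$-close to $A$-harmonic functions (Lemma~\ref{l:M}), and a Campanato-type iteration on this approximation (Lemmas~\ref{l:local-holder}--\ref{l:rescaled-est}) yields the estimate. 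You instead impose $\|u\|_{L^2(B)}=1$ as a hard constraint, keep a single volume penalty, and derive H\"older continuity by the classical free-boundary route: Moser iteration for $L^\infty$ from the weak subsolution inequality, then $A$-harmonic replacement to obtain the quasi-minimality
\[
\int_{B_r(x_0)}\nabla u_*\cdot(A\nabla u_*)\dd x \leq \int_{B_r(x_0)}\nabla v\cdot(A\nabla v)\dd x + Cr^d,
\]
followed by oscillation decay. Both strategies should succeed, but they are not the same. A few remarks on yours: the subsolution inequality $-\nabla\cdot(A\nabla u_*)\le\lambda_*u_*$ is not really a consequence of ``inner variations''; it comes from comparing $u_*$ with $(u_*-t\phi)_+$-type truncations whose support only shrinks, and then renormalizing in $L^2$ --- this must be done with care because of the normalization. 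Also, the phrase ``places $u_*$ in a standard De Giorgi class'' needs justification: the harmonic-replacement comparison by itself gives a Campanato-type decay for $\inf_c\int_{B_\rho}|u_*-c|^2$ after combining the energy gap $\int_{B_r}|\nabla(u_*-v)|^2\lesssim r^d$ (hence $\|u_*-v\|_{L^2(B_r)}^2\lesssim r^{d+2}$ by Poincar\'e) with the De Giorgi--Nash--Moser interior estimate for $v$; to actually land in a De Giorgi class one should test the quasi-minimality against truncation competitors $u_*-\zeta^2(u_*-k)_+$, and one must check the resulting lower-order term $CR^d$ is an admissible remainder for the level-set iteration. These are fixable but nontrivial details, and the paper's compactness argument avoids them entirely.

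\textbf{A genuine gap: the volume constraint.} Your claimed proof that $\mathcal{L}^d(\Omega_*)\ge 1$ does not work. Competing against any $\Omega\in\mathcal C$ with its zero-penalty eigenfunction gives only $\lambda_1^A(\Omega_*)\le J_\Lambda(u_*)\le\lambda_1^A(\Omega)$; it says nothing about $\mathcal{L}^d(\Omega_*)$. Since the penalty $\Lambda(\mathcal{L}^d(\{u\ne0\})-1)_+$ does not punish volumes below $1$, it is entirely possible that the minimizer has $\mathcal{L}^d(\Omega_*)<1$ (the paper acknowledges exactly this possibility). The fix is the one the paper uses at the end of Section~\ref{s:holder}: if $\mathcal{L}^d(\Omega_*)<1$, enlarge $\Omega_*$ to $\Omega_r=\bigcup_{x\in\Omega_*}B_r(x)\cap B$ for a suitable $r>0$ so that $\mathcal{L}^d(\Omega_r)=1$, use monotonicity of $\lambda_1^A$ with respect to set inclusion to see $\Omega_r$ still minimizes over $\mathcal C$, and observe its eigenfunction is again a minimizer of the penalized functional so the H\"older bound applies. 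Without this step, your argument only produces a minimizer over $\mathcal C'=\{\Omega\text{ open},\ \bar\Omega\subset B,\ \mathcal L^d(\Omega)\le1\}$, not over $\mathcal C$.
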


In particular, if the optimal domain $\Omega_*$ does not intersect the boundary of $B$, then $u_*$ is globally $\alpha$-H\"older continuous on $\Omega_*$.

It is also clear from our proof that for any optimal domain (even quasi-open), the corresponding eigenfunction is entitled to the same H\"older estimate as $u_*$. In particular, any minimizer over the class of quasi-open subsets of $B$ is in fact open.

As mentioned above, one naturally cannot expect anything better than H\"older continuity for the eigenfunction $u_*$ in the interior of $\Omega_*$. However, our next result shows that $u_*$ satisfies a Lipschitz-like growth estimate along $\partial\Omega_*$:

\begin{theorem}\label{t:boundary}
With $u_*$ and $\Omega_*$ as in Theorem \ref{t:main}, and any boundary point $x_0$ of $\Omega_*$ that lies inside $B$, there are constants $C, r>0$ depending only on $d$, $\theta$, $\Theta$, $\mathcal L^d(B)$, and $\dist(x_0, \partial B)$ such that 
\[ 
\frac {|u_*(x)|} {|x-x_0|} \leq C\|u_*\|_{L^2(\Omega_*)} , \quad x\in B_r(x_0).
\]
\end{theorem}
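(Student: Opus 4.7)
The argument adapts Alt--Caffarelli's Lipschitz estimate at free boundary points to the rough-coefficient setting, replacing pointwise comparisons by integral bounds together with Moser--Harnack inequalities. From the proof of Theorem~\ref{t:main}, $u_*$ minimizes a penalized functional of the form
\[
J_\Lambda(u) = \int_B A\nabla u\cdot\nabla u\, \dd x - \lambda_*\int_B u^2\, \dd x + \Lambda\, g\!\bigl(\mathcal L^d(\{u>0\}) - 1\bigr),
\]
where $\lambda_* := \lambda_1^A(\Omega_*)$, $\Lambda>0$ is a volume-constraint Lagrange multiplier, and $g$ is a one-sided convex penalization. A distributional calculation (using $A\nabla u_*\cdot\nu_{\Omega_*} \leq 0$ on $\partial\Omega_*$) shows $u_*$ is an $A$-subsolution: $-\nabla\cdot(A\nabla u_*) \leq \lambda_* u_*$ on $B$.

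Fix $x_0 \in \partial\Omega_*$ and $r>0$ small, and let $w \in H^1(B_r(x_0))$ solve $-\nabla\cdot(A\nabla w)=0$ in $B_r(x_0)$ with $w = u_*$ on $\partial B_r(x_0)$, extended by $u_*$ outside. Comparison of the $A$-subsolution $u_*-w$ with a Poisson-type barrier gives the pointwise bound $u_* \leq w + Cr^2\|u_*\|_{L^2(\Omega_*)}$ on $B_r$. Testing $w$ as a competitor in $J_\Lambda$ and exploiting the $A$-orthogonality $\int_{B_r} A\nabla w \cdot \nabla(w-u_*)\,\dd x = 0$, combined with Poincar\'e's inequality, produces the energy/$L^2$ estimate
\[
\|u_*-w\|_{L^2(B_r(x_0))}^2 \;\leq\; C\,r^{d+2}\,\|u_*\|_{L^2(\Omega_*)}^2.
\]
Suppose, for contradiction, $\sup_{B_{r/2}(x_0)} u_* \geq Kr\|u_*\|_{L^2(\Omega_*)}$ for some large $K$, and let $y_0 \in B_{r/2}(x_0)$ realize the supremum. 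Then $w(y_0) \geq \tfrac{1}{2}Kr\|u_*\|_{L^2(\Omega_*)}$ for $r$ small, and Harnack's inequality applied to the nonnegative $A$-harmonic function $w$ on $B_r(x_0)$ gives $w \geq c_H\, K r\|u_*\|_{L^2(\Omega_*)}$ throughout $B_{r/4}(x_0)$.

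On $\{u_*=0\}\cap B_{r/4}(x_0)$ we have $|w-u_*|=w \geq c_H K r\|u_*\|_{L^2(\Omega_*)}$, hence
\[
\int_{\{u_*=0\}\cap B_{r/4}(x_0)}(u_*-w)^2\, \dd x \;\geq\; c_H^2 K^2 r^2\,\|u_*\|_{L^2(\Omega_*)}^2 \cdot \mathcal L^d\!\bigl(\{u_*=0\}\cap B_{r/4}(x_0)\bigr).
\]
A separate application of minimality---testing $u_*(1-\eta)$ with $\eta \in C_c^\infty(B_r(x_0))$ a cutoff identically equal to $1$ on $B_{r/4}$---establishes the density estimate $\mathcal L^d(\{u_*=0\}\cap B_{r/4}(x_0)) \geq \gamma r^d$ for some $\gamma>0$ depending only on the data. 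Combining this with the $L^2$-bound above forces $c_H^2 K^2 \gamma \leq C$, so $K$ is uniformly bounded in terms of $d,\theta,\Theta,\mathcal L^d(B)$. This contradicts the assumption for $K$ large and yields the claimed Lipschitz growth of $u_*$ at $x_0$.

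\textbf{Main obstacle.} The delicate step is the density estimate. In smooth-coefficient free-boundary problems this follows from a Bernoulli-type condition, but here it must be extracted purely from $J_\Lambda$-minimality via integral inequalities. Schematically, the Caccioppoli-controlled energy cost of the cutoff competitor is of order $\|u_*\|_{L^2(B_r)}^2/r^2$, while the volume-penalty gain is $\Lambda\cdot\mathcal L^d(B_{r/4}(x_0)\cap\{u_*>0\})$; for sufficiently large $\Lambda$ and small $r$ the latter must dominate, yielding density. Care is needed because the only available interior regularity of $u_*$ is H\"older (exponent $\alpha<1$), so a self-improving version of the standard density argument, tailored to the rough coefficients, is required.
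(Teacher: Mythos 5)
Your plan follows the Alt--Caffarelli template (harmonic replacement, Harnack, density of the zero set), but this route runs into a problem the paper explicitly flags as open. The proof as written depends crucially on a density estimate of the form $\mathcal L^d(\{u_*=0\}\cap B_{r/4}(x_0)) \geq \gamma r^d$, and your sketch of that step relies on a ``volume-penalty gain'' of order $\Lambda \cdot \mathcal L^d(B_{r/4}(x_0)\cap\{u_*>0\})$ that should beat the Caccioppoli-controlled energy cost of the cutoff competitor. In the paper's actual setting that gain does not exist: the penalization term in $F_{\delta,\eps}$ is the one-sided $\frac1\eps(\mathcal L^d(\{u\neq 0\})-1)_+$, which is already zero for minimizers (Lemma~\ref{l:constraint}(b) gives $\mathcal L^d(\{u>0\})\leq 1$), so shrinking the positivity set yields no reward in the energy. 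The authors say this explicitly in Section~1.5: the functional $F_{\delta,\eps}$ ``is not convenient for proving nondegeneracy because it is not a strictly monotonic function of $\mathcal L^d(\{u\neq 0\})$,'' and both density estimates and nondegeneracy are listed as open. Replacing $F_{\delta,\eps}$ by the Lagrange-multiplier-type $J_\Lambda$ with a two-sided $g$ would repair the sketch, but establishing that $u_*$ minimizes such a $J_\Lambda$ with a bounded multiplier is precisely the step that (as the authors note, citing the argument in \cite{briancon2009}) uses interior Lipschitz regularity of $u_*$, which is unavailable here because $A$ is only $L^\infty$.

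The paper sidesteps the density question entirely. Its proof (Lemma~\ref{l:Nzero} plus an iteration) is a pure compactness/blow-up argument: rescalings $v_n=\kappa_n u(x_n+r_n\cdot)$ around a boundary point, bounded in $L^2$ and with $r_n^2(1+\kappa_n^2)\to 0$, converge \emph{uniformly} (thanks to the $C^\alpha$ estimate of Lemma~\ref{l:rescaled-est}, not merely in $L^2$ as in Lemma~\ref{l:M}) to a nonnegative $A_0$-harmonic function $v_0$ with $v_0(0)=0$. The strong maximum principle then forces $v_0\equiv 0$, giving uniform smallness $\sup_{B_{1/4}}v_n\leq\tau$ for large $n$. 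Iterating this at dyadic scales $4^{-j}$ produces the linear upper bound, with no use of nondegeneracy, density, or the structure of $\{u_*=0\}$. So while your plan is sound in spirit for Alt--Caffarelli-type problems with regular coefficients, it does not close in the rough-coefficient setting, and the step you label the ``main obstacle'' is in fact an unresolved obstruction rather than a technical detail to be filled in.
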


\subsection{Related work}\label{s:related}

Summarizing the large body of work on optimal domains for eigenvalues of the Laplacian, including higher eigenvalues and functions of eigenvalues, is outside the scope of this introduction, so we refer to \cite{henrot2003second, briancon2009, bucur2012, mazzoleni2013, bucur2015lipschitz, kriventsov2018, kriventsov2019, bucur2019neumann, mazzoleni2022} and the references therein, as well as the surveys \cite{henrot2004survey, buttazzo2011review}.

Results that apply to variable-coefficient operators are less common. The well-known general result of Buttazzo-Dal Maso \cite{buttazzo1993existence} allows one to minimize $\lambda_1^A(\Omega)$ over the class of quasi-open subsets of $B$ that have volume 1. Quasi-open sets are the positivity sets of $H^1$ functions; see e.g. \cite{buttazzo1993existence} or the book \cite{bucur2005book} for more information on quasi-open sets and their role in optimization problems. The result of \cite{buttazzo1993existence} includes operators with rough coefficients and actually applies to much more general shape functionals. However, their approach does not guarantee that the minimizer $\Omega$ is an open set.



Under the assumption that $A(x)$ is Lipschitz continuous, Wagner \cite{wagner2005optimal} proved existence of a minimizing set for $\lambda_1^A$ that is open and has finite perimeter, and that the corresponding eigenfunction is Lipschitz. His proof used a penalized functional of the form
\begin{equation}\label{e:wagner}
J_\eps(w) = \frac{ \int_B \nabla w\cdot (A\nabla w) \dd x}{\int_B w^2 \dd x} + f_\eps(\mathcal L^d(\{w>0\})),
\end{equation}
where for $\eps>0$,
\[
f_\eps(t) =
\begin{cases}
 \dfrac 1 \eps(t-1),  &t \geq 1,\\[6pt]
 \eps (t-1), &t\leq 1.
 \end{cases}
 \]
Minimizers of $J_\eps$ were shown to be Lipschitz continuous using techniques inspired by the study of Alt-Caffarelli-type free boundary problems, e.g. \cite{alt1981, alt1984, aguilera1986}, and since the Lipschitz estimate is independent of $\eps$, it also applies to eigenfunctions corresponding to minimal domains for $\lambda_1^A$. 
 However, Wagner's proof uses the regularity of $A$ in an essential way, so the techniques of \cite{wagner2005optimal} 
are not applicable to our problem.


More recently, Trey \cite{trey2020lipschitz, trey2021} studied optimal domains for the first $k$ eigenvalues of an operator with H\"older continuous coefficients, and derived regularity results for both the eigenfunctions and the optimal sets. 
In \cite{russ2019}, the authors studied a shape optimization problem based on an operator with drift $-\Delta + V(x)\cdot\nabla$ for some bounded vector field $V$. We also refer to \cite{lamboley2020manifold} for the Faber-Krahn problem on a Riemannian manifold, and \cite{feldman2022} on the homogenization of eigenvalue minimizers in the context of Lipschitz continuous, periodic coefficients $A(x)$.

\subsection{Proof summary}


The basis of our argument is the following penalized functional: for any $\delta, \eps>0$, define
\begin{equation}\label{e:F-delta-eps}
\begin{split}
 F_{\delta,\eps}(u) &:= \int_B \nabla u\cdot(A\nabla u) \dd x + \dfrac 1 \delta \left|\int_B u^2\dd x - 1\right| + \dfrac 1 \eps \left(\mathcal L^d(\{u\neq 0\})-1\right)_+.
\end{split}
\end{equation}
Note that $F_{\delta,\eps}(|u|) \leq F_{\delta,\eps}(u)$, so we can always assume minimizers of $F_{\delta,\eps}$ are nonnegative. For technical reasons involving competitor functions that change sign, we use this functional rather than one defined in terms of $\mathcal L^d(\{u>0\})$. 

We prove in Lemma \ref{l:constraint} that for $\delta, \eps>0$ small enough, minimizers of $F_{\delta, \eps}$ satisfy the constraints $\int_B u^2 \dd x = 1$ and $\mathcal L^d(\{u\neq 0\})\leq 1$. The choice of $F_{\delta,\eps}$ and the proof of Lemma \ref{l:constraint} are inspired by the work of Brian\c{c}on-Hayouni-Pierre \cite{briancon2005lipschitz}, who studied optimal domains for the first Dirchlet-Laplace eigenvalue, subject to an inclusion constraint. See also \cite{friedman1995, tilli2000} for earlier, related approaches.

Next, we apply a flexible approximation method for proving H\"older continuity of minimizers of $F_{\delta,\eps}$. The key observation is that our functional $F_{\delta,\eps}$ behaves well under rescalings of the form $u \mapsto v(x) = \kappa u(x_0+rx)$. More precisely, $v$ minimizes a functional of a similar form but with $r^2/\delta$ and $\kappa^2r^2/\eps$ replacing $1/\delta$ and $1/\eps$. One may then expect that if $r^2(1+\kappa^2)$ is small enough, rescalings of $u$ are approximated by $A$-harmonic functions, and we prove via that such an approximation indeed holds, in the $L^2$ sense (Lemma \ref{l:M}). This approximation is used to prove H\"older continuity for $u$, which implies $\{u>0\}$ is open, and therefore a minimizer of the original optimization problem. 

The scaling property of $F_{\delta,\eps}$ described in the previous paragraph corresponds to Lipschitz regularity. (To see this, consider the rescaling $u\mapsto \rho^{-1} u(x_0 + \rho x)$, which preserves $\kappa^2 r^2 = 1$ as $\rho\searrow 0$.) This cannot be fully exploited in general, since the regularity of $u$ is limited by the irregularity of $A(x)$. However, at boundary points of $\{u>0\}$, we can improve our approximation argument for rescaled minimizers using our H\"older estimate, and conclude the rescaled function $v$ is uniformly close to a much smoother configuration (Lemma \ref{l:Nzero}). This uniform estimate is iterated at small scales to obtain the Lipschitz growth estimate of Theorem \ref{t:boundary}.

This strategy for proving regularity is related to earlier work of the second named author: see \cite{teixeira2013sharp} which dealt with critical Poisson equations, as well as \cite{LST2020} (joint with Lamboley and Sire) on Alt-Caffarelli free boundary problems with singular weights.


Let us comment on the dependence of constants in Theorems \ref{t:main} and \ref{t:boundary} on the measure of $B$. This dependence is technical (we believe) and arises from the estimates needed to show minimizers of our penalized functional satisfy the volume constraint, see Lemma \ref{l:constraint}(b). If $B$ is a sufficiently large ball, one might expect to prove that minimizers have diameter bounded independently of $B$, following an approach like \cite{mazzoleni2013}, which found that domains of large diameter cannot be minimizers to a spectral problem involving Laplacian eigenvalues. We do not explore this issue further in the present article.

\subsection{Open questions about the optimal domain} 
One would like to understand how regular the boundary of the optimal domain $\Omega_*$ must be. Because we are working in a rough medium, it is not obvious how much regularity we can expect. In the positive direction, the next step would be density estimates of the form $c\mathcal L^d(B_r(x_0)) < \mathcal L^d(\Omega_* \cap B_r(x_0)) < (1-c)\mathcal L^d(B_r(x_0))$ for some constant $c\in (0,1)$, where $x_0\in \partial \Omega_*$ and $r>0$ is sufficiently small. If we could prove  nondegeneracy estimates that say $u_*$ grows at least at some linear rate from $\partial \Omega_*$, then with the matching upper bounds of our Theorem \ref{t:boundary}, we could derive  density estimates for $\partial \Omega_*$ by adapting standard proofs for Alt-Caffarelli minimizers from e.g. \cite{alt1984}.

Our functional $F_{\delta,\eps}$ is not convenient for proving nondegeneracy because it is not a strictly monotonic function of $\mathcal L^d(\{u\neq 0\})$. 
A natural idea from the literature would be to use a different penalization term that rewards volumes less than one by a small amount, such as $f_\eps(\mathcal L^d(\{u>0\}))$ in \eqref{e:wagner} as in \cite{wagner2005optimal}. A related approach can be found in  \cite{briancon2009} where, working in the Laplacian case, the authors show $u_*$ is a local minimizer of a functional like
\[
\int_B |\nabla w|^2 - \lambda_* \int_B w^2  + \mu \mathcal L^d(\{w>0\}),
\]
with respect to perturbations $v$ such that $1- h \leq \mathcal L^d(\{v>0\}) \leq 1$, where $\lambda_* = \lambda_1(\Omega_*)$ and $\mu>0$ is sufficiently small, depending on $h$. 
The proof of this minimization property in \cite{briancon2009} uses the Lipschitz continuity of $u_*$ in the interior of $\Omega$, and therefore, like \cite{wagner2005optimal}, it is not directly applicable in our context.

Whether this obstacle is an artifact of the available methods, or whether nice regularity properties for $\partial\Omega_*$ are actually false in the context of discontinuous media, remains an interesting question for future work.

\subsection{Organization of the paper} In Section \ref{s:pen}, we show that minimizers of $F_{\delta,\eps}$ satisfy the $L^2$ and volume constraints for small $\delta$ and $\eps$. In Section \ref{s:holder} we prove that minimizers $u$ are H\"older continuous and conclude Theorem \ref{t:main}. In Section \ref{s:linear}, we prove the growth estimate of Theorem \ref{t:boundary}.

\section{The penalized functional}\label{s:pen}

The existence of a minimizer $u$ of $F_{\delta,\eps}$ over $H_{0}^1(B)$ follows by standard arguments. In detail, taking a minimizing sequence $u_n$, one extracts a subsequence converging weakly in $H^1(B)$, strongly in $L^2(B)$, and pointwise a.e. to a limit $u$ that is nonnegative a.e. It is well known that the first two terms in $F_{\delta,\eps}$ are lower semicontinuous with respect to weak $H^1(B)$ convergence. For the last term, we have $1_{\{u\neq 0\}} \leq \lim_{n\to \infty} 1_{\{u_n\neq 0\}}$ a.e., and Fatou's Lemma implies $(\mathcal L^d(\{u\neq 0\}) - 1)_+ \leq \lim_{n\to\infty} (\mathcal L^d(\{u_n\neq 0\}) - 1)_+$. We conclude $u$ is a minimizer for $F_{\delta,\eps}$. As mentioned above, we may assume $u\geq 0$.


Next, comparing $u$ to some fixed function $v\in H_{0}^1(B)$ with $\int_B v^2 \dd x = 1$ and $\mathcal L^d(\{v\neq 0\}) = 1$, we see that $F_{\delta,\eps}(u) \leq C_0$, or equivalently,
\begin{equation}\label{e:eps-independent}
\int_{B} \nabla u\cdot(A(x)\nabla u) \dd x + \frac 1 \delta \left|\int_B u^2 \dd x - 1\right| + \frac 1 \eps\left(\mathcal L^d(\{u\neq 0\}) - 1\right)_+ \leq C_0,
\end{equation}
for some $C_0>0$ independent of $\eps$ and $\delta$. 

We now show that minimizers of $F_{\delta,\eps}$ satisfy the $L^2$ and volume constraints that we want our optimal eigenfunction to satisfy:

\begin{lemma}\label{l:constraint}
\begin{enumerate}
\item[(a)] Let $C_0$ be the constant from \eqref{e:eps-independent}. If $\delta < \delta_0 := 1/(2C_0)$, then  any minimizer $u$ of $F_{\delta, \eps}$ over $H_{0}^1(B)$ satisfies 
\[
\int_B u^2 \dd x = 1.
\]
\item[(b)] Assume $\delta< 1/(2C_0)$ as in \textup{(a)}. There exists $\eps_0>0$ depending on $d$, $\theta$, and $\mathcal L^d(B)$, such that any nonnegative minimizer $u$ of $F_{\delta,\eps}$ over $H_{0}^1(B)$ with $\eps < \eps_0$ satisfies
\[ 
\mathcal L^d(\{u>0\}) \leq 1.
\]
\end{enumerate}
\end{lemma}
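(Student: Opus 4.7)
The plan for (a) is to exploit the rescaling $u \mapsto cu$ for $c > 0$. Since the volume penalty is invariant under such rescalings,
\[
F_{\delta,\eps}(cu) = c^2 \int_B \nabla u \cdot A\nabla u \dd x + \frac 1 \delta \left|c^2 b - 1\right| + \frac 1 \eps (\mathcal L^d(\{u \neq 0\}) - 1)_+,
\]
where $b := \int_B u^2 \dd x$. Elementary analysis of this piecewise-affine-in-$c^2$ function shows that when $a := \int_B \nabla u \cdot A\nabla u \dd x < b/\delta$, the unique minimizer over $c > 0$ is $c = 1/\sqrt b$, which corresponds to $\int_B (cu)^2 \dd x = 1$. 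The key sub-step is to verify this inequality: the a priori bound \eqref{e:eps-independent} yields both $a \leq C_0$ and $|b - 1| \leq \delta C_0$, so under the hypothesis $\delta < 1/(2C_0)$ we have $b \geq 1/2$, and thus $b/\delta > 1/(2\delta) > C_0 \geq a$. Since $u$ itself is a minimizer of $F_{\delta,\eps}$, the optimal $c$ must be $1$, which forces $b = 1$.

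For (b), I would argue by contradiction, assuming $V := \mathcal L^d(\{u > 0\}) > 1$, and test against the truncated competitor $v_t := (u - t)_+$ for small $t > 0$. Let $m(t) := \mathcal L^d(\{0 < u \leq t\})$, so $m(t) \to 0$ as $t \to 0^+$ and $\mathcal L^d(\{v_t > 0\}) = V - m(t)$. The bound \eqref{e:eps-independent} also gives $V \leq 1 + \eps C_0$, so for $\eps_0$ small $V$ stays close to $1$, and in particular $1 < V - m(t) < \mathcal L^d(B)/2$ for all sufficiently small $t$. Estimating
\[
\int_B u^2 \dd x - \int_B v_t^2 \dd x \leq 2t\,\|u\|_{L^1(B)} + t^2 m(t) \leq C_1 t
\]
for small $t$ (with $C_1$ depending only on dimension), the minimality inequality $F_{\delta,\eps}(u) \leq F_{\delta,\eps}(v_t)$ together with part (a) yields
\[
\int_{\{0 < u \leq t\}} \nabla u \cdot A\nabla u \dd x + \frac{m(t)}{\eps} \leq \frac{C_1 t}{\delta}.
\]

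To close the argument in the generic case that $m(t) > 0$ for all small $t > 0$, I would invoke the coarea formula together with the isoperimetric inequality in $\R^d$. Since $\mathcal L^d(\{u > s\}) \geq 1$ for all small $s$, the isoperimetric inequality yields $\mathcal H^{d-1}(\{u = s\}) \geq c_d$ for a.e.\ such $s$; by coarea, $\int_{\{0 < u \leq t\}}|\nabla u|\dd x \geq c_d t$, and Cauchy--Schwarz then gives $\int_{\{0 < u \leq t\}}|\nabla u|^2 \dd x \geq c_d^2 t^2 / m(t)$. Combining with ellipticity $A \geq \theta I$ and the previous inequality,
\[
\frac{\theta c_d^2 t^2}{m(t)} + \frac{m(t)}{\eps} \leq \frac{C_1 t}{\delta},
\]
and AM--GM forces $\sqrt{\eps} \geq 2 c_d \sqrt{\theta}\, \delta / C_1$. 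This is violated for $\eps < \eps_0$ chosen suitably small, producing the desired contradiction.

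The step I expect to be trickiest is the degenerate case in which $m(t_0) = 0$ for some $t_0 > 0$; here $u \geq t_0$ a.e.\ on $\{u > 0\}$ and there is no small scale on which the coarea/isoperimetric bound can be used. I would handle this separately by observing that the truncation $u \wedge t_0 \in H^1(B)$ equals $t_0 \cdot 1_{\{u > 0\}}$ a.e., so $1_{\{u > 0\}} \in H^1(B)$. A characteristic function lies in $H^1(B)$ only when its set has zero distributional perimeter in $B$, which by connectedness of $B$ forces $\{u > 0\}$ to equal either $\emptyset$ (contradicting $\int_B u^2 \dd x = 1$) or $B$ modulo null sets (contradicting $V \leq 1 + \eps C_0 < \mathcal L^d(B)$ for $\eps_0$ sufficiently small). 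Either branch yields a contradiction and completes the proof of (b).
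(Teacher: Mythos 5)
Your argument is correct and follows essentially the same route as the paper: part (a) reduces to comparing $u$ with its $L^2$-normalization, and part (b) proceeds by contradiction via the truncation $(u-t)_+$, the coarea formula, and the isoperimetric inequality. The only substantive differences are presentational. In (a) you analyze the full one-parameter family $c\mapsto F_{\delta,\eps}(cu)$ and locate the unique minimizer $c=1/\sqrt b$, whereas the paper just plugs in $c=1/\sqrt b$ and derives $b=1$ by contradiction; the two are equivalent, but your version is arguably cleaner. In (b), the paper optimizes the integrand $\theta z + (1/\eps - s^2/\delta)/z$ pointwise before integrating, while you integrate first and then apply Cauchy--Schwarz together with AM--GM; both yield the same type of lower bound $\sqrt\eps \gtrsim \delta$. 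Your explicit handling of the degenerate case $m(t_0)=0$ via $\1_{\{u>0\}}\in H^1(B)$ and the perimeter/connectedness argument is a nice addition; note that the paper sidesteps this since the coarea--isoperimetric inequality, applied to $u$ extended by zero to $\R^d$, already forces $\int_0^t\mathcal H^{d-1}(\{u=s\})\,\dd s \geq c_d t$ as long as $\mathcal L^d(\{u>s\})\geq 1$, so the degenerate case is excluded automatically. One small inaccuracy: your constant $C_1$, coming from the bound $\|u\|_{L^1}\leq \mathcal L^d(B)^{1/2}\|u\|_{L^2}$, depends on $\mathcal L^d(B)$ and not only on the dimension; this does not affect the conclusion since $\eps_0$ is allowed to depend on $\mathcal L^d(B)$.
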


\begin{proof}
From \eqref{e:eps-independent} and our assumption $\delta< 1/(2C_0)$, we conclude $\int_B u^2 \dd x \geq \frac 1 2$, and in particular $u\not\equiv 0$. Therefore, since $F_{\delta,\eps}(u) \leq F_{\delta,\eps} (u/\|u\|_{L^2(B)})$, 
\[
\int_B |\nabla u|^2  \dd x+ \frac 1 \delta \left|\int_B u^2 \dd x - 1\right| \leq \frac {\int_B |\nabla u|^2 \dd x} {\int_B u^2 \dd x},
\]
which implies 
\begin{equation}\label{e:u-eps1}
\int_B |\nabla u|^2 \dd x \left( \int_B u^2 \dd x - 1\right) + \frac 1 \delta \left|\int_B u^2 \dd x - 1\right| \int_B u^2 \dd x \leq 0,
\end{equation}
and therefore $\int_B u^2 \dd x \leq 1$. Using this in \eqref{e:u-eps1}, we have
\[ 
\left( \int_B u^2 \dd x -1 \right)\left(\int_B |\nabla u|^2\dd x- \frac 1 \delta \int_B u^2 \dd x\right)\leq 0.
\]
If $\int_B u^2 \dd x < 1$, then this implies
\begin{equation}\label{e.one-over-delta}
\frac{\int_B |\nabla u|^2 \dd x}{\int_B u^2 \dd x} \geq \frac 1 \delta,
\end{equation}
which is a contradiction for $\delta$ small enough. Indeed, \eqref{e:eps-independent} implies $\int_B |\nabla u|^2 \dd x \leq C_0$ and $\int_B u^2 \dd x \geq 1 - C_0\delta$, so \eqref{e.one-over-delta} cannot happen if $\delta < 1/(2C_0)$.  This concludes the proof of (a).

To prove (b), following \cite{briancon2005lipschitz}, assume by contradiction that $\mathcal L^d(\{u>0\}) > 1$. Then for $t>0$ small enough depending on $u$, the function 
\[ u^t = (u - t)_+\]
also satisfies $\mathcal L^d(\{u_t>0\}) > 1$, and the minimizing property implies, using $\int u^2\dd x = 1$,
\[
\begin{split}
\int_{B} [\nabla u\cdot (A\nabla u) - \nabla u^t \cdot (A\nabla u^t)] \dd x  &\leq 
 \frac 1 \eps\left[ \left(\mathcal L^d(\{u^t>0\}) - 1\right) - \left(\mathcal L^d(\{u>0\}) - 1\right)\right]\\
 &\quad + \frac 1 \delta \left| \int_B (u^t)^2 \dd x - 1\right|\\
 &= - \frac 1 \eps \mathcal L^d(\{0<u< t\}) + \frac 1 \delta\left| \int_B (u^t)^2 \dd x - 1\right|
\end{split}
\]
and, using $\int u^2\dd x = 1$ again,
\[
\begin{split}
\int_{\{ 0< u< t\}} \nabla u \cdot (A\nabla u) \dd x +  \frac 1 \eps \mathcal L^d(\{0<u< t\}) &\leq  \frac 1 \delta\left| \int_B [u^2 -  (u^t)^2] \dd x \right|\\
&\leq \frac 1 \delta \left( \int_{\{0< u< t\}} u^2 \dd x + 2t \int_B u \dd x\right)\\
&\leq \frac 1 \delta \left( \int_{\{0< u< t\}} u^2 \dd x + 2t \mathcal L^d(B)^{1/2}\right),
\end{split}
\]
using H\"older's inequality and $\int_{B} u^2 \dd x = 1$ in the last inequality. This implies
\[
\int_{\{0< u < t\}} \left[ \theta|\nabla u|^2  + \frac 1 \eps  - \frac  {u^2} \delta \right] \dd x \leq \frac {2t \mathcal L^d(B)^{1/2}} \delta.
\]
Applying the coarea formula for Sobolev functions \cite{federer} on the left, we have
\[
\int_0^t \int_{\{u = s\}} \left( \theta |\nabla u| + \frac {1/\eps-s^2/\delta} { |\nabla u|} \right) \dd \mathcal H_{d-1} \dd s \leq \frac {2t \mathcal L^d(B)^{1/2}} \delta.
\]
Minimizing the expression $g(z) = \theta z+ (1/\eps-s^2/\delta)/ z$ over $z\in (0,\infty)$, we obtain 
\[
g(z) \geq 2[\theta(1/\eps-s^2/\delta)]^{1/2},
\]
and if $s\leq t\leq \sqrt {\delta/(2\eps)}$, we have $g(z)\geq \sqrt{2\theta/\eps}$.  Therefore, for such $t$, 
\[
\sqrt{\frac {2\theta} \eps} \int_0^t \int_{\{u =s\}} \dd \mathcal H_{d-1} \dd s \leq \frac {2t \mathcal L^d(B)^{1/2}} \delta.
\]
Applying the isoperimetric inequality\footnote{Here, we use the standard fact that almost every level set of $u\in H^1(B)$ is countably $(n-1)$ rectifiable, in order to apply the isoperimetric inequality.} to the left-hand side, this implies
\[
\sqrt{\theta}\int_0^t C_d \mathcal L^d(\{u \geq s\})^{(d-1)/d} \dd s \leq  \frac {\sqrt {2 \eps} t \mathcal L^d(B)^{1/2}} \delta.
\]
Dividing by $t$ and taking the limit as $t\to 0$, we obtain
\[ 
\sqrt{\theta}C_d  \leq \frac{\sqrt{2\eps} \mathcal L^d(B)^{1/2}} \delta,
\]
since $\mathcal L^d(\{u\geq 0\})>1$. This is a contradiction for $\eps$ sufficiently small.
\end{proof}

From Lemma \ref{l:constraint}, we immediately obtain the following important fact:

\begin{proposition}\label{p:equiv}
Assume that $\delta$ and $\eps$ are small enough as in Lemma \ref{l:constraint}.

If $u$ is a nonnegative minimizer of $F_{\delta,\eps}$ over $H_{0}^1(B)$, then $\Omega_u = \{u>0\}$ is a minimizer of $\lambda_1^A$ over 
\[
\mathcal C' := \{\Omega \text{ open}, \Omega \subset B, \mathcal L^d(\Omega) \leq 1\}.
\]
Conversely, if $\Omega$ is a minimizer of $\lambda_1^A$ over $\mathcal C'$, then the $L^2$-normalized first eigenfunction of $L$ on $\Omega$ is a minimizer of $F_{\delta,\eps}$ over $H_{0}^1(B)$. 
\end{proposition}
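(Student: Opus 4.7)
The plan is to exploit Lemma \ref{l:constraint}: once $\delta,\eps$ are small enough, a nonnegative minimizer $u$ of $F_{\delta,\eps}$ automatically satisfies $\int_B u^2 \dd x = 1$ and $\mathcal L^d(\{u>0\})\leq 1$, so both penalty terms vanish and $F_{\delta,\eps}(u) = \int_B \nabla u\cdot(A\nabla u)\dd x$. This identifies the value $F_{\delta,\eps}(u)$ with the Rayleigh quotient for $u$ on $\Omega_u=\{u>0\}$, reducing the equivalence to a direct comparison of test functions.

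For the forward direction, fix any competitor $\Omega\in \mathcal C'$ and let $v_\Omega$ be its $L^2$-normalized nonnegative first eigenfunction, extended by zero to $B$. Then $v_\Omega\in H_0^1(B)$ with $\int_B v_\Omega^2 \dd x = 1$ and $\mathcal L^d(\{v_\Omega\neq 0\})\leq \mathcal L^d(\Omega)\leq 1$, so $F_{\delta,\eps}(v_\Omega) = \lambda_1^A(\Omega)$. The minimizing property of $u$, together with the fact that $u$ is itself an admissible test function in the Rayleigh quotient defining $\lambda_1^A(\Omega_u)$, yields
\[
\lambda_1^A(\Omega_u) \leq \int_B \nabla u\cdot(A\nabla u)\dd x = F_{\delta,\eps}(u) \leq F_{\delta,\eps}(v_\Omega) = \lambda_1^A(\Omega),
\]
which is the claimed optimality of $\Omega_u$.

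For the converse, suppose $\Omega\in \mathcal C'$ minimizes $\lambda_1^A$ over $\mathcal C'$, and let $u$ be its $L^2$-normalized nonnegative first eigenfunction, extended by zero to $B$; then $F_{\delta,\eps}(u) = \lambda_1^A(\Omega)$. Given any $\tilde v\in H_0^1(B)$, the existence argument at the start of this section provides a minimizer $w$ of $F_{\delta,\eps}$, and the forward direction, applied to $w$, gives that $\Omega_w$ minimizes $\lambda_1^A$ on $\mathcal C'$. Hence
\[
F_{\delta,\eps}(\tilde v) \geq F_{\delta,\eps}(w) = \lambda_1^A(\Omega_w) = \lambda_1^A(\Omega) = F_{\delta,\eps}(u),
\]
so $u$ itself is a minimizer of $F_{\delta,\eps}$.

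The delicate point I expect to be the main obstacle is that at this stage $\Omega_u$ is known only to be quasi-open, so its membership in $\mathcal C'$ cannot be certified without the continuity of $u$ coming from the H\"older estimate of Section \ref{s:holder}. Until that regularity is in hand, $\lambda_1^A(\Omega_u)$ is to be interpreted via the Rayleigh quotient on the quasi-open set $\{u>0\}$, which is all that the comparison above genuinely uses; the H\"older continuity later upgrades this to a bona fide minimizer within the open-set class $\mathcal C'$.
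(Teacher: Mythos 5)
Your proof is correct and supplies exactly the argument the paper leaves implicit: the paper offers no proof at all, dismissing the proposition as an immediate consequence of Lemma \ref{l:constraint}, and your expansion (penalty terms vanish for both the minimizer of $F_{\delta,\eps}$ and for an eigenfunction on a competitor $\Omega\in\mathcal C'$, so the two minimization problems have the same value) is the intended one. You also correctly flag the one genuine subtlety — that $\Omega_u=\{u>0\}$ is only quasi-open until the H\"older estimate of Section \ref{s:holder} is in hand — which is consistent with the fact that the paper only invokes Proposition \ref{p:equiv} after Theorem \ref{t:holder} has established openness of $\Omega_*$.
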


\section{H\"older continuity}\label{s:holder}

In this section, we prove H\"older continuity of minimizers. Throughout this section, $u$ is a minimizer of $F_{\delta,\eps}$ over $H_{0}^1(B)$, for some $\delta,\eps>0$.  For now, we allow our constants to depend on $\delta$ and $\eps$, and we do not assume that $\delta$ and $\eps$ are small as in Lemma \ref{l:constraint}.

First, we record how the minimization problem transforms under zooming in near a point:

\begin{lemma}\label{l:rescale}
For any $x_0\in B$, $\kappa>0$, $\xi\in \R$, and $0< r < \dist(x_0,\partial B)$, let
\[
v(x) := \kappa u(x_0+rx) - \xi, \quad x\in B_1.
\]
Then $v$ is a minimizer of 
\[
\begin{split}
F_{\kappa,\xi,r}(w) &= \int_{B_1} \nabla w \cdot (A(x_0+rx) \nabla w) \dd x + \frac {r^2} \delta \left|\int_{B_1} (w+\xi)^2 \dd x - \nu\right| + \frac {r^2\kappa^2} \eps \left( \mathcal L^d(\{w\neq-\xi\}) - \gamma\right)_+,\\
\nu &= \kappa^2 r^{-d}\left(1 - \int_{B \setminus B_r(x_0)} u^2 \dd x\right),\\
\gamma &= r^{-d} \mathcal L^d(\{u\neq 0\}\setminus B_r(x_0)),
\end{split}
\]
over
\[
H^1_{v}(B_1) = \{ w \in H^1(B_1) : w - v \in H_0^1(B_1)\}.
\]
\end{lemma}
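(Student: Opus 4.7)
The plan is a direct change-of-variables argument: from any admissible competitor $w \in H^1_v(B_1)$ for $v$, I would build a corresponding competitor $\tilde u \in H_0^1(B)$ for $u$, and show that each of the three terms of $F_{\delta,\eps}(\tilde u) - F_{\delta,\eps}(u)$ scales into the corresponding difference $F_{\kappa,\xi,r}(w) - F_{\kappa,\xi,r}(v)$ by one and the same constant prefactor. Minimality of $u$ for $F_{\delta,\eps}$ then translates, term-by-term, into minimality of $v$ for $F_{\kappa,\xi,r}$.

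Concretely, I would set
\[
\tilde u(y) := \begin{cases} \kappa^{-1}\!\left[w\!\left(\tfrac{y-x_0}{r}\right) + \xi\right], & y \in B_r(x_0), \\ u(y), & y \in B \setminus B_r(x_0), \end{cases}
\]
and verify $\tilde u \in H_0^1(B)$ using the fact that $w - v \in H_0^1(B_1)$ forces matching traces across $\partial B_r(x_0)$ once one inverts the substitution $y = x_0 + rx$. The choice $w = v$ recovers $\tilde u = u$, so any inequality between $\tilde u$ and $u$ translates directly into one between $w$ and $v$.

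Next I would evaluate each term of $F_{\delta,\eps}(\tilde u) - F_{\delta,\eps}(u)$ under the substitution $y = x_0 + rx$. The gradient integrals on $B_r(x_0)$ pick up a factor $\kappa^{-2} r^{d-2}$, while the $L^2$- and volume-integrals pick up a factor $r^d$, with an extra $\kappa^{-2}$ in the $L^2$ case. The shift constants $\nu$ and $\gamma$ in the statement are engineered precisely so that the fixed contributions from $B \setminus B_r(x_0)$, together with the constraint constants $1$, cancel out of the differences $\left|\int_B \tilde u^2 \dd y - 1\right|$ and $\bigl(\mathcal L^d(\{\tilde u \neq 0\}) - 1\bigr)_+$, leaving exactly the shifted expressions $\left|\int_{B_1}(w+\xi)^2 \dd x - \nu\right|$ and $\bigl(\mathcal L^d(\{w \neq -\xi\}) - \gamma\bigr)_+$ multiplied by the scaling factors above.

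Finally I would multiply $F_{\delta,\eps}(\tilde u) - F_{\delta,\eps}(u) \geq 0$ through by the common prefactor $\kappa^2 r^{2-d}$, which rebalances the weights $1/\delta$ and $1/\eps$ into the new weights $r^2/\delta$ and $r^2\kappa^2/\eps$ that appear in $F_{\kappa,\xi,r}$. The resulting inequality is exactly $F_{\kappa,\xi,r}(w) \geq F_{\kappa,\xi,r}(v)$, and since $w$ was arbitrary, $v$ is a minimizer. There is no serious obstacle here; the only thing requiring care is consistent bookkeeping of the three scaling factors across the three terms, and checking that $\nu$ and $\gamma$ are the unique shifts making the constant parts of the $L^2$ and volume differences cancel. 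Notably, the argument does not invoke the constraints from Lemma \ref{l:constraint}: it is a pure rescaling identity valid for any minimizer $u$ of $F_{\delta,\eps}$.
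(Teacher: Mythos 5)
Your proposal is exactly the direct calculation the paper invokes without writing out: from a competitor $w \in H^1_v(B_1)$ you glue the competitor $\tilde u$ for $u$, carry each term of $F_{\delta,\eps}$ through the substitution $y = x_0 + rx$, and normalize by the common prefactor $\kappa^2 r^{2-d}$; your bookkeeping of the three per-term factors ($\kappa^{-2}r^{d-2}$ for the Dirichlet energy, $\kappa^{-2}r^d$ for the $L^2$ term, $r^d$ for the volume term) is correct and yields precisely the stated weights $1$, $r^2/\delta$, and $r^2\kappa^2/\eps$. One useful byproduct of the careful step you flag, namely ``checking that $\nu$ and $\gamma$ are the unique shifts making the constant parts cancel,'' is that it exposes a typo in the printed statement: by the same computation that produces $\nu$, the volume shift must be $\gamma = r^{-d}\bigl(1 - \mathcal L^d(\{u\neq 0\}\setminus B_r(x_0))\bigr)$, i.e.\ the ``$1-$'' is missing in the paper's formula (this does not affect any later argument, since only uniform bounds on $\gamma$ are used downstream).
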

\begin{proof}
Direct calculation.
\end{proof}

Next, we have a Caccioppoli estimate for rescalings of $u$, which will be needed in our compactness argument:

\begin{lemma}\label{l:caccioppoli}
With $x_0$, $\kappa$, $r$, $\xi$, and $v$ as in Lemma \ref{l:rescale}, we have
\begin{equation}\label{e:caccioppoli}
 \int_{B_{1/2}} |\nabla v|^2 \dd x \leq C \left( (\kappa^2 +\xi^2)r^2 +  \left(1+r^2\right) \int_{B_1} v^2 \dd x\right),
 \end{equation}
with $C$ depending only on $d, \theta$, $\Theta$, $\delta$, and $\eps$.
\end{lemma}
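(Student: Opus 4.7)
The plan is a standard Caccioppoli argument, adapted to carry along the extra penalty terms in $F_{\kappa,\xi,r}$. Fix a smooth cutoff $\eta \in C_c^\infty(B_1)$ with $\eta \equiv 1$ on $B_{1/2}$, $0 \leq \eta \leq 1$, and $|\nabla\eta| \leq C$, and compare $v$ to the competitor $w := (1-\eta^2)v$. Since $\eta$ vanishes near $\partial B_1$, $w \in H^1_v(B_1)$, so minimality gives $F_{\kappa,\xi,r}(v) \leq F_{\kappa,\xi,r}(w)$. The bound will come from estimating each of the three terms in this inequality.

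For the Dirichlet term I expand $\nabla w = (1-\eta^2)\nabla v - 2\eta v\nabla\eta$ and use the identity $1 - (1-\eta^2)^2 = \eta^2(2-\eta^2) \geq \eta^2$. Combining ellipticity on the principal quadratic piece, boundedness $|A|\leq \Theta$, and Young's inequality to absorb the cross term $(1-\eta^2)\eta\, v\, \nabla\eta\cdot A\nabla v$ into a small multiple of $\eta^2|\nabla v|^2$, I expect
\[
\int_{B_1} \nabla v\cdot A\nabla v \dd x - \int_{B_1} \nabla w\cdot A\nabla w \dd x \;\geq\; \frac{\theta}{2}\int_{B_1} \eta^2 |\nabla v|^2 \dd x - C\int_{B_1} v^2 |\nabla\eta|^2 \dd x,
\]
with $C = C(\theta,\Theta)$.

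For the $L^2$-constraint term, writing $I(u) := \int_{B_1}(u+\xi)^2\dd x - \nu$, the pointwise identity $(w+\xi)^2 - (v+\xi)^2 = (w-v)(w+v+2\xi) = -\eta^2 v(2v - \eta^2 v + 2\xi)$ combined with $2|v||\xi| \leq v^2 + \xi^2$ gives $|I(w)| - |I(v)| \leq |I(w) - I(v)| \leq C\int_{B_1}(v^2 + \xi^2)\dd x$. For the volume term, $w$ and $v$ coincide outside $\spt\eta$, so the symmetric difference $\{w\neq-\xi\}\triangle\{v\neq-\xi\}$ has measure at most $\mathcal{L}^d(\spt\eta) \leq C$; by the $1$-Lipschitz property of $t\mapsto(t-\gamma)_+$, the third-term difference is likewise bounded by $C$. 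Inserting these three estimates into the minimality inequality, restricting to $B_{1/2}$ where $\eta\equiv 1$, and absorbing the (fixed) dependence on $\delta,\eps$ into the final constant yields \eqref{e:caccioppoli}.

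The main obstacle, such as it is, is the volume penalty: perturbations of $v$ move the set $\{v\neq-\xi\}$ in a nonsmooth way, so unlike the gradient and $L^2$-penalty pieces no Euler--Lagrange-type first variation is readily available. Fortunately, \eqref{e:caccioppoli} only demands the $r^2\kappa^2$ scaling on the right-hand side, not any quantitative smallness, so the crude symmetric-difference bound $|\mathcal{L}^d\{w\neq-\xi\} - \mathcal{L}^d\{v\neq-\xi\}| \leq \mathcal{L}^d(\spt\eta)$ is enough at this step; a sharper estimate would only be needed if one wanted smallness of the volume penalty itself.
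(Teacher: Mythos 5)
Your proof is correct and mirrors the paper's argument essentially line for line: you use the same competitor $w=(1-\eta^2)v\in H^1_v(B_1)$, the same Young-inequality absorption for the Dirichlet term, the same triangle-inequality expansion $(w+\xi)^2-(v+\xi)^2=\eta^2[(\eta^2-2)v^2-2\xi v]$ for the $L^2$-penalty, and the same crude $O(1)$ bound on the volume penalty (the paper bounds the difference by $2\mathcal L^d(B_1)$ rather than $\mathcal L^d(\spt\eta)$, but this is immaterial). Your closing remark correctly identifies that the volume term needs only a bounded, not small, estimate at this stage.
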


\begin{proof}
Let $\tilde A(x) = A(x_0+rx)$. Let $\zeta$ be a smooth function with compact support in $B_1$, with $0\leq  \zeta \leq 1$, to be chosen later.   Letting $w = v(1-\zeta^2)$, since $w = v$ on $\partial B_1$, we have $F_{\kappa,\xi,r}(v)\leq F_{\kappa,\xi,r}(w)$, which implies
\[ 
\begin{split} 
\int_{B_1} [\nabla v\cdot (\tilde A \nabla  v) - \nabla w\cdot (\tilde A\nabla w)] \dd x &\leq \frac{r^2} \delta \left(\left| \int_{B_1} (w+\xi)^2\dd x - \nu\right| - \left| \int_{B_1} (v+\xi)^2 \dd x - \nu\right| \right)\\
&\quad + \frac {\kappa^2r^2} \eps \left(\left( \mathcal L^d(\{w\neq -\xi\}) - \gamma\right)_+ - \left(\mathcal L^d(\{v\neq -\xi\}) - \gamma\right)_+\right)\\
&\leq \frac {r^2} \delta \left| \int_{B_1} [(w+\xi)^2 - (v+\xi)^2]\dd x \right| \\
&\quad + \frac{\kappa^2r^2} \eps \left( \mathcal L^d(\{w\neq -\xi\})  - \mathcal L^d(\{v\neq -\xi\}) \right)_+.
\end{split}
\]
Straightforward calculations with $w = v(1-\zeta^2)$ now imply
\[
\begin{split}
\int_{B_1} \nabla v\cdot (\tilde A\nabla v) \zeta^2 (2-\zeta^2) \dd x &\leq  4 \int_{B_1} \left[v^2 \zeta^2 \nabla \zeta\cdot(\tilde A \nabla \zeta) - v \zeta (1-\zeta^2)\nabla \zeta\cdot (\tilde A\nabla v)\right] \dd x \\
&\quad + \frac{r^2}\delta \left|\int_{B_1} \zeta^2 [v^2  (\zeta^2 - 2) - 2\xi v] \dd x\right|\\
&\quad  + \frac{\kappa^2r^2}\eps\left( \mathcal L^d(\{w\neq -\xi\})  - \mathcal L^d(\{v\neq -\xi\}) \right)_+.
\end{split}
\]
By the ellipticity of $\tilde A(x)$, Young's inequality, and $0\leq \zeta\leq 1$, we have
\[\begin{split}
& \theta \int_{B_1} \zeta^2 |\nabla v|^2 \dd x\\
 &\leq 4\Theta \int_{B_1} \left( v^2 |\nabla \zeta|^2 + |v| \zeta |\nabla \zeta| |\nabla v|\right)\dd x  + \frac{2r^2} \delta \left(\int_{B_1} v^2 \dd x + \xi \int_{B_1} |v| \dd x\right) +  \frac {2\kappa^2 r^2} \eps \mathcal L^d(B_1) \\
&\leq \frac \theta 2 \int_{B_1} \zeta^2 |\nabla v|^2 \dd x + 4\Theta\left(1 +\frac 1 {2\theta}\right) \int_{B_1} v^2 |\nabla \zeta|^2  \dd x  + \frac{r^2} \delta \left(3\int_{B_1} v^2 \dd x + \xi^2 \right) +  \frac {2\kappa^2 r^2} \eps \mathcal L^d(B_1) .
\end{split}
  \]
Combining terms, and choosing $\zeta$ radially decreasing in $B_1$ such that $\zeta = 1$ in $B_1$ and $\zeta = 0$ outside $B_1$, with $|\nabla \zeta|\leq 4$, we have
\[ 
\int_{B_1} |\nabla v|^2 \dd x \leq C\left(\left(\frac{r^2} \delta  + 1 \right) \int_{B_1}v^2 \dd x + \frac{\xi^2 r^2} \delta + \frac {\kappa^2 r^2} \eps \right),
\]
which implies the statement of the lemma.
\end{proof}

The following key lemma says that up to suitable rescalings, $u$ is locally approximated by $A_0$-harmonic functions, for some $A_0$ in the same ellipticity class as $A$.

\begin{lemma}\label{l:M}
Given $\tau>0$, there exists $M= M(d,\theta, \Theta,\delta,\eps, \tau)>0$ sufficiently small, such that for any rescaling $v$ of $u$, defined as in Lemma \ref{l:rescale}, such that 
\[
r^2(1+\kappa^2 + \xi^2) \leq M,
\]
and 
\[
\fint_{B_1} v^2 \dd x \leq 1,
\] 
there holds 
\begin{equation}\label{e:v-h-tau}
 \fint_{B_{1/2}} |v - h|^2 \dd x \leq \tau,
 \end{equation}
where $h$ is a weak solution of 
\begin{equation}\label{e:h-eqn}
 -\nabla\cdot ( A_0(x)\nabla h) = 0, \quad \text{in } B_{1/2},\\
\end{equation}
for some $A_0 \in M_{\theta,\Theta}(B_{1/2})$, and $h$ satisfies $\fint_{B_{1/2}} h^2 \dd x \leq 2^{d+2}$. In fact, we can take $h$ to be the $A_0$-harmonic lifting of $v$ in $B_{1/2}$. 
\end{lemma}

By a weak solution of \eqref{e:h-eqn}, we mean that $\int_{B_{1/2}} \nabla \psi \cdot (A_0(x) \nabla h)\dd x = 0$ for any $\psi\in H_0^1(B_{1/2})$.

\begin{proof}
%
Let $v(x) = \kappa u(x_0+rx)$ be as in the statement of the lemma, and define $A_0(x) = A(x_0+rx)$.  

We will need the following bound on $v$ in $H^1(B_{1/2})$, which follows from the Caccioppoli estimate of Lemma \ref{l:caccioppoli}: 
\begin{equation}\label{e:cacc-ineq}
\int_{B_{1/2}} |\nabla v|^2 \dd x \leq C\left( (\kappa^2 + \xi^2)r^2 + (1+r^2) \int_{B_1} v^2 \dd x\right) \leq C(1+ M) \left( 1 + \int_{B_1} v^2 \dd x\right).
\end{equation}


Next, define $h:B_{1/2} \to \R$ as the minimizer of
\[
\widetilde F(w) := \int_{B_{1/2}} \nabla w\cdot (A_0(x) \nabla w) \dd x,
\]
over
\[
\{ w \in H^1(B_{1/2}) : w- v \in H_0^1(B_{1/2}) \}.
\]
Let us also define $h(x)=v(x)$ for $x\in B_1\setminus B_{1/2}$.  As usual, this $h$ is the weak solution to
\[
\begin{cases}
 -\nabla \cdot (A_0(x) \nabla h) = 0, \quad x\in B_{1/2},\\
h - v \in H_0^1(B_{1/2}),
\end{cases}
\]
which in particular implies the identity $\int_{B_{1/2}} \nabla (v-h) \cdot (A_0(x) \nabla h) \dd x = 0$. Using this identity, Poincar\'e's inequality, and the minimizing property of $v$ (from Lemma \ref{l:rescale}), we have
\begin{equation}\label{e:chain}
\begin{split}
\fint_{B_{1/2}} (v-h)^2 \dd x &\leq C \int_{B_{1/2}} |\nabla (v-h)|^2 \dd x\\
&\leq \frac C \lambda \int_{B_{1/2}} \nabla (v-h) \cdot (A_0(x)\nabla (v-h)) \dd x\\
&= \frac C \lambda \left(\int_{B_{1/2}} \nabla v\cdot (A_0(x) \nabla v) \dd x - \int_{B_{1/2}} \nabla h\cdot (A_0(x) \nabla h) \dd x\right)\\
&\leq \frac {r^2} \delta \left( \left| \int_{B_1} (h+\xi)^2 \dd x - \nu\right| - \left| \int_{B_1} (v+\xi)^2 \dd x - \nu\right|\right)\\
&\quad + \frac{ r^2 \kappa^2} \eps \left( \left(\mathcal L^d(\{h \neq -\xi\}) - \gamma\right)_+ - \left(\mathcal L^d(\{v\neq -\xi\}) - \gamma\right)_+\right)\\
&\leq \frac {r^2} \delta \left| \int_{B_{1/2}} (h^2 - v^2 +2\xi(h-v))\dd x\right| + \frac {r^2\kappa^2}\eps \left( \mathcal L^d(\{h\neq - \xi\}) - \mathcal L^d(\{v\neq -\xi\})\right)_+,
\end{split}
\end{equation}
by the triangle inequality. (Here, $\nu$ and $\gamma$ are defined as in Lemma \ref{l:rescale}.) To bound the last expression, we begin with the second term:
\[
\frac{r^2 \kappa^2} \eps  \left( \mathcal L^d(\{h\neq - \xi\}) - \mathcal L^d(\{v\neq -\xi\})\right)_+ \leq \frac {2r^2\kappa^2} \eps \mathcal L^d(B_1) \leq C_1M,
\]
for some constant $C_1>0$ depending only on $d$ and $\eps$. Next, we have
\begin{equation}\label{e:term1}
\frac {r^2} \delta \left| \int_{B_{1/2}} (h^2 - v^2 +2\xi(h-v))\dd x\right| \leq \frac {C r^2} \delta \left( \int_{B_{1/2}} h^2 \dd x + \int_{B_{1/2}} v^2 \dd x + \xi^2\right).
\end{equation}
We need to estimate $\int_{B_{1/2}} h^2\dd x$, and this will be accomplished via a bound for  $\int_{B_{1/2}} (v-h)^2\dd x$ that is more crude than \eqref{e:chain}. Another application of the Poincar\'e inequality gives
\[
\begin{split}
\int_{B_{1/2}} (v-h)^2 \dd x &\leq C \int_{B_{1/2}} |\nabla (v-h)|^2 \dd x\\
&\leq C  \int_{B_{1/2}} (|\nabla v|^2 + |\nabla h|^2) \dd x\\
&\leq \frac C \lambda \left(\int_{B_{1/2}} \nabla v\cdot (A_0(x) \nabla v) \dd x +  \int_{B_{1/2}} \nabla h\cdot (A_0(x) \nabla h) \dd x\right)\\
&\leq \frac {2C} \lambda \int_{B_{1/2}} \nabla v\cdot (A_0(x) \nabla v) \dd x \\
&\leq \frac {2C\Lambda} \lambda \int_{B_{1/2}} |\nabla v|^2 \dd x \leq \frac {2C\Lambda} \lambda (1+M) \left(1 + \int_{B_1} v^2 \dd x\right).
\end{split}
\]
by the minimizing property of $h$ and by \eqref{e:cacc-ineq}. This implies 
\[
\begin{split}
\int_{B_{1/2}} h^2 \dd x &\leq 2\int_{B_{1/2}} v^2 \dd x + 2 \int_{B_{1/2}} (v-h)^2 \dd x\\
&\leq 2\int_{B_{1/2}} v^2 \dd x + \frac{4C\Lambda} \lambda (1+M) \left(1 + \int_{B_1} v^2 \dd x\right)\\
&\leq C_{d,\lambda,\Lambda}(1+M),
\end{split}
\]
since $\fint_{B_1} v^2 \dd x \leq 1$ by assumption. Taking $M<1$, we have shown $\int_{B_{1/2}} h^2 \dd x \leq C_2$ for some constant $C_2$ depending only on $\lambda$, $\Lambda$, and $d$. Returning to \eqref{e:term1}, we have
\[
\frac {r^2} \delta \left| \int_{B_{1/2}} (h^2 - v^2 +2\xi(h-v))\dd x\right| \leq \frac {Cr^2}\delta \left( C_2 + 1 + \xi^2\right) \leq C(1+C_2) M,
\]
by our assumption on $r$ and $\xi$. Combining all our estimates, we finally have
\[
\fint_{B_{1/2}} |v-h|^2 \dd x \leq C M,
\]
for a constant $C>0$ depending on $\delta$, $\eps$, $d$, $\lambda$, and $\Lambda$. Choosing $M< \tau/C$, we have established \eqref{e:v-h-tau}. We also choose $M$ small enough that 
\[
\fint_{B_{1/2}} h^2 \dd x \leq 2\fint_{B_{1/2}}(v-h)^2\dd x + 2\fint_{B_{1/2}} v^2\dd x \leq 2 CM +2^{d+1} \leq 2^{d+2},
\] 
as desired.
\end{proof}

The next lemma is a local oscillation estimate that uses the approximation of Lemma \ref{l:M}:

\begin{lemma}\label{l:local-holder}
There exist $M>0$, $\alpha\in (0,1)$, $\rho_0\in (0,1/4)$, and $K>0$, such that for any $x_0$, $\kappa$, $r$, and $\xi$ with $r^2(1+\kappa^2 + \xi^2) \leq M$ such that 
\[
v(x) = \kappa u(x_0+rx) - \xi, \quad x\in B_1,
\]
satisfies $\fint_{B_1} v^2 \dd x \leq 1$,  there holds
\begin{equation}\label{e:mu-bound}
 \fint_{B_{\rho_0}} |v - \mu|^2 \dd x \leq \rho_0^{2\alpha},
 \end{equation}
for some constant $\mu$ with $|\mu|\leq K$. 

The constants $\alpha$ and $K$ depend on $d$, $\theta$, and $\Theta$. The constants $\rho_0$ and $M$ depend on $d$, $\theta$, $\Theta$, $\delta$, and $\eps$.
\end{lemma}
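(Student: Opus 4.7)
The plan is to combine the $L^2$-approximation of Lemma \ref{l:M} with the classical De Giorgi-Nash-Moser interior H\"older estimate for divergence-form elliptic equations with bounded measurable coefficients. Heuristically, by choosing $M$ small, $v$ is close in $L^2$ to an $A_0$-harmonic function $h$ on $B_{1/2}$; since such $h$ is uniformly H\"older continuous near $0$, it is nearly constant on $B_{\rho_0}$, and taking $\mu = h(0)$ transfers this smallness to $v$ in the average sense.

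More precisely, I would proceed as follows. Fix parameters $\alpha \in (0,1)$, $\rho_0 \in (0, 1/4)$, and $\tau>0$ to be calibrated below, and apply Lemma \ref{l:M} to obtain $h$ weakly solving $-\nabla\cdot(A_0\nabla h)=0$ in $B_{1/2}$ with $A_0 \in M_{\theta,\Theta}(B_{1/2})$, $\|h\|_{L^2(B_{1/2})} \leq 1$, and $\|v - h\|_{L^2(B_{1/2})}^2 \leq \tau$. By the classical De Giorgi-Nash-Moser theorem, there exist $\beta \in (0,1)$ and $C_0 > 0$ depending only on $d$, $\theta$, $\Theta$ (and crucially not on the specific $A_0$) such that
\[
\|h\|_{L^\infty(B_{1/4})} + [h]_{C^{\beta}(B_{1/4})} \leq C_0 \|h\|_{L^2(B_{1/2})} \leq C_0.
\]

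Next, I would set $\mu := h(0)$, so that $|\mu| \leq C_0 =: K$. Splitting the average and using $|h(x) - h(0)| \leq C_0 \rho_0^\beta$ on $B_{\rho_0} \subset B_{1/4}$,
\[
\fint_{B_{\rho_0}} |v - \mu|^2 \dd x \leq 2 \fint_{B_{\rho_0}} |v - h|^2 \dd x + 2 \fint_{B_{\rho_0}} |h - \mu|^2 \dd x \leq \frac{C_d \tau}{\rho_0^d} + 2 C_0^2 \rho_0^{2\beta},
\]
where $C_d$ is purely dimensional. To close, select any $\alpha \in (0, \beta)$, then $\rho_0 \in (0, 1/4)$ small enough that $2 C_0^2 \rho_0^{2\beta} \leq \tfrac{1}{2} \rho_0^{2\alpha}$ (possible because $\beta > \alpha$), and finally $\tau := \rho_0^{d + 2\alpha}/(2 C_d)$ so that $C_d \tau / \rho_0^d \leq \tfrac{1}{2} \rho_0^{2\alpha}$. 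Lemma \ref{l:M} with this $\tau$ yields the required threshold $M = M(d, \theta, \Theta, \delta, \eps, \tau)$, and the inequality \eqref{e:mu-bound} follows.

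As for obstacles, I do not anticipate substantial analytic difficulty, since the heavy lifting is done by Lemma \ref{l:M} and by classical interior regularity for uniformly elliptic divergence-form equations. The two points that merit care are (i) that the De Giorgi-Nash-Moser constants $\beta$ and $C_0$ depend only on $d$, $\theta$, $\Theta$ and not on the particular limit operator $A_0$, so that $\rho_0$, $\alpha$, and $K$ inherit only the allowed dependencies; and (ii) the order in which the parameters are fixed, namely $\alpha$ first, then $\rho_0$, then $\tau$, then $M$, to avoid any circular dependence among them.
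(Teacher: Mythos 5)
Your proof is correct and follows essentially the same route as the paper: invoke Lemma \ref{l:M} to approximate $v$ in $L^2$ by an $A_0$-harmonic $h$, use De Giorgi--Nash--Moser interior H\"older and $L^\infty$ estimates (with constants depending only on $d,\theta,\Theta$) to control $h$ near the origin, set $\mu=h(0)$, split the average over $B_{\rho_0}$, and then fix the parameters in the order $\alpha$, $\rho_0$, $\tau$, $M$ to close. The only differences are cosmetic choices of constants (the paper sets $\alpha=\beta/2$ and $\rho_0=(4C)^{-1/\beta}$ explicitly rather than ``small enough'').
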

 We intend to apply this lemma first to localize around a point $x_0\in B$ and ensure the rescaled $L^2$ norm is less than 1 by a suitable choice of $\kappa$, and then iteratively with $r$ and $\kappa$ chosen according to the scaling of  $\alpha$-H\"older continuity.
\begin{proof}
Let $\tau>0$ be a constant to be chosen later. With $v$ as in the statement of the lemma, let $h:B \to \R$ be the solution to $-\nabla \cdot (A\nabla h) =0$ with
\[ 
\fint_{B_{1/2}} |v - h|^2 \dd x < \tau,
\]
whose existence is guaranteed by Lemma \ref{l:M}. The choice of $\tau$ will determine $M$. Since $\fint_{B_{1/2}} h^2 \dd x \leq 2^{d+2}$, the interior regularity estimate for the equation satisfied by $h$ \cite[Theorem 8.24]{gilbargtrudinger} implies
\[  |
h(x) - h(0)|  \leq C |x|^\beta,
\]
for any $x\in B_{1/4}$, where $C>0$ and $\beta \in (0,1)$ depend on $d$, $\theta$, and $\Theta$. 
With $\rho_0 \in (0,1/4)$ to be chosen later, we now have
\begin{align*}
\fint_{B_{\rho_0}} |v(x) - h(0)|^2 \dd x &\leq 2\left( \fint_{B_{\rho_0}} |v(x) - h(x)|^2 \dd x + \fint_{B_{\rho_0}} |h(x) - h(0)|^2 \dd x\right)\\
 &\leq  2^{1-d} \rho_0^{-d}\tau + 2C \rho_0^{2\beta}.
\end{align*}
Choosing
\[ 
\rho_0 = \left(\frac 1 {4C} \right)^{1/\beta}, \quad \tau = 2^{d-2} \rho_0^{d+\beta},
\]
we have
\[
\fint_{B_{\rho_0}} |v(x) - h(0)|^2 \dd x \leq  \rho_0^{\beta}.
\]
Letting $\alpha = \beta/2$ and $\mu = h(0)$, we have shown \eqref{e:mu-bound}. The bound $|\mu|\leq K$ is a result of the interior $L^2$-to-$L^\infty$ estimate satisfied by $h$ \cite[Theorem 8.17]{gilbargtrudinger}.
\end{proof}

Next, we iterate Lemma \ref{l:local-holder} to show that under the same hypotheses, $v$ is H\"older continuous at the origin:

\begin{lemma}\label{l:rescaled-est}
Let $M$, $\alpha$, and $\rho_0$ be the constants granted by Lemma \ref{l:local-holder}, and let 
\[
v(x) = \kappa u(x_0+rx),
\]
be a rescaling of $u$ with $r^2(1+\kappa^2) \leq M$, such that $\fint_{B_1} v^2 \dd x \leq 1$. 
 Then there exists $C>0$ depending only on $d$, $\theta$, $\Theta$, $\delta$, and $\eps$, such that
\[
|v(x) - v(0)|\leq C|x|^\alpha, \quad \text{ if } |x|< \rho_0.
\]
\end{lemma}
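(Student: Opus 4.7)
The plan is to iterate Lemma \ref{l:local-holder} at the origin in the standard ``contracting tower'' fashion. Set $v_0 := v$, and inductively define
\[
v_{k+1}(x) := \sigma \rho_0^{-\alpha}\bigl(v_k(\rho_0 x) - \mu_k\bigr),
\]
where $\mu_k$ is the approximating constant produced by applying Lemma \ref{l:local-holder} to $v_k$ (with $|\mu_k|\leq K$), and $\sigma>0$ is a dimensional constant introduced to re-normalize the $L^2$-mass back to $\int_{B_1}v_{k+1}^2\dd x \leq 1$ (the conclusion of Lemma \ref{l:local-holder} is stated in terms of $\fint$, so an $\omega_d^{1/2}$ has to be absorbed). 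Crucially, if $v_k$ is a rescaling of $u$ with parameters $(r_k,\kappa_k,\xi_k)$, then so is $v_{k+1}$, with updated parameters
\[
r_{k+1} = \rho_0\, r_k, \qquad \kappa_{k+1} = \sigma\rho_0^{-\alpha}\kappa_k, \qquad \xi_{k+1} = \sigma\rho_0^{-\alpha}(\xi_k + \mu_k).
\]

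The key inductive check is that the smallness hypothesis $r_k^2(1+\kappa_k^2+\xi_k^2)\leq M$ of Lemma \ref{l:local-holder} persists along the tower. Substituting the recursion and using $|\mu_k|\leq K$, this reduces to a condition of the form $C\rho_0^{2-2\alpha}(1+K^2)\leq 1$, which is achievable since $\alpha<1$; if the value of $\rho_0$ coming out of Lemma \ref{l:local-holder} is not already small enough, one simply shrinks $\rho_0$ further and correspondingly shrinks $M$ and $\tau$ inside the lemma (its proof works for any sufficiently small $\rho_0$). Unwinding the recursion then produces constants $P_k$ satisfying $|P_{k+1}-P_k|\leq C\rho_0^{k\alpha}$ and $\fint_{B_{\rho_0^k}}|v-P_k|^2\dd x \leq C\rho_0^{2k\alpha}$. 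The telescoping series for $P_k$ converges geometrically, its limit is declared to be $v(0)$, and we obtain $\fint_{B_{\rho_0^k}}|v-v(0)|^2\dd x\leq C\rho_0^{2k\alpha}$.

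To upgrade this $L^2$-Campanato decay at the origin into the pointwise Hölder bound, I would rerun the same iteration around every point $y$ in a small neighborhood of $0$: any rescaling of $v$ about $y$ is still a rescaling of $u$ (now centered at $x_0+ry$, with the same $\kappa$ and $\xi$), so Lemma \ref{l:local-holder} applies uniformly in $y$ and yields $\fint_{B_r(y)}|v-c_{y,r}|^2\dd x\leq C r^{2\alpha}$ for every nearby $y$ and every small $r$. Campanato's classical characterization of Hölder continuity then provides $v\in C^{0,\alpha}$ on a neighborhood of $0$, and the stated estimate follows. The main obstacle is precisely the bookkeeping in the inductive step: the additive appearance of $\mu_k$ in the recursion for $\xi_k$ threatens geometric growth at rate $\sigma\rho_0^{-\alpha}$, and only the factor $\rho_0^2$ from $r_{k+1}^2=\rho_0^2 r_k^2$ absorbs it, leaving a per-step margin of order $\rho_0^{2-2\alpha}$. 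This forces a careful joint tuning of $\rho_0$, $\alpha$, $M$, $K$, and $\sigma$, but the margin is always positive, which is what makes the contracting tower close.
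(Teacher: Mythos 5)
Your proposal follows the paper's proof essentially verbatim: the same contracting-tower iteration of Lemma~\ref{l:local-holder} at scales $\rho_0^k$, the same bookkeeping of the rescaling parameters $(r_k,\kappa_k,\xi_k)$ with the $\rho_0^{-\alpha}$ growth of $\kappa_k,\xi_k$ beaten by the $\rho_0^2$ decay of $r_k^2$, and the same device of shrinking $\rho_0$ if necessary to close the induction. The two places where you add care — inserting a normalizing constant $\sigma$ so that the $L^2$ constraint $\int_{B_1}v_{k+1}^2\dd x\le 1$ is genuinely preserved (the paper's claim at that step is really $\le\omega_d$, since Lemma~\ref{l:local-holder} gives an $\fint$ bound), and invoking Campanato's characterization rather than reading the pointwise conclusion directly off the $L^2$ decay at a single center — are minor refinements that tighten, but do not change, the paper's argument.
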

\begin{proof}
Our goal is to show by induction that 
\begin{equation}\label{e:induction}
 \fint_{B_{\rho_0^j}} (v - \mu_j)^2 \dd x \leq \rho_0^{2j\alpha}, \quad j = 1,2, \ldots,
\end{equation}
for some convergent sequence $\mu_j$. The base case $j=1$ follows directly from Lemma \ref{l:local-holder}, with $|\mu_1|\leq K$.

Assume by induction that \eqref{e:induction} holds for some $j\geq 1$, with $|\mu_j|\leq K(1-\rho_0^{j\alpha})/(1-\rho_0^\alpha)$.  Note that the upper bound for $\mu_j$ is the $(j-1)$th partial sum of the geometric series $\sum_{i\geq 0} K \rho_0^{i\alpha}$. 
Define
\[ 
\begin{split}
v_j(x) &=  \frac {v(\rho_0^j x) - \mu_j}{\rho_0^{j\alpha}}\\
&= \frac{\kappa u(x_0+\rho_0^j rx) - \mu_j}{\rho_0^{j\alpha}},  \quad x\in B_1.
\end{split}
\] 
The inductive hypothesis \eqref{e:induction} implies 
\[
\fint_{B_1}v_j^2 \dd x = \rho_0^{-2j\alpha} \fint_{B_{\rho_0^j}} (v-\mu_j)^2 \dd x \leq 1.
\]
Since $v_j$ is a rescaling of $u$ with parameters $x_0$, $\kappa_j = \kappa \rho_0^{-j\alpha}$, $\xi_j = \mu_j \rho_0^{-j\alpha}$, and $r_j = r\rho_0^j$, we also need to check that $r_j^2(1+\kappa_j^2 + \xi_j^2)\leq M$. Indeed, since the inductive hypothesis implies $|\mu_j|\leq K(1-\rho_0^\alpha)^{-1}$, we have
\[ 
\begin{split}
r_j^2(1+\kappa_j^2 + \xi_j^2) &= r^2 \rho_0^{2j} (1+\kappa^2 \rho_0^{-2j\alpha} +  \mu_j^2 \rho_0^{-2j \alpha})\\
&\leq r^2 \rho_0^{2j} + \kappa^2 r^2 \rho_0^{2j(1-\alpha)} +  r^2 K^2 (1-\rho_0^\alpha)^{-2} \rho_0^{2j(1-\alpha)},
\end{split}
\]
Since $\rho_0\leq 1$ and $r^2(1+\kappa^2+\xi^2)\leq M$, we can choose $\rho_0$ smaller if necessary (depending on $K$, which depends only on $d$, $\theta$, and $\Theta$) so that $\rho_0^{j(2-\alpha)} r^2K^2(1-\rho_0^\alpha)^{-2}\leq \frac 1 2 M$ and $r^2\rho_0^{2j} + \kappa^2 r^2\rho_0^{2j(1-\alpha)} \leq \frac 1 2 r^2(1+\kappa^2)$ for any $j$, and as a result, $r_j^2(1+\kappa_j^2 + \xi_j^2)\leq M$. 

Since all the hypotheses of Lemma \ref{l:local-holder} are satisfied, we obtain
\[ 
\fint_{B_{\rho_0}} |v_j - \mu|^2 \dd x \leq  \rho_0^{2\alpha},
\]
for some constant $\mu$ with $|\mu|\leq K$. Translating back to $v$ with the change of variables $x \mapsto \rho_0^j x$, this bound becomes
\[ 
\fint_{B_{\rho_0^{j+1}}} |v - \mu_j - \rho_0^{j\alpha} \mu|^2 \dd x \leq  \rho_0^{2\alpha(j+1)},
\]
and we have shown \eqref{e:induction} with $\mu_{j+1} = \mu_j + \rho_0^{j\alpha}\mu$. Using our upper bound for $\mu_j$, we have
\[
|\mu_{j+1}| \leq |\mu_j| + \rho_0^{j\alpha}|\mu| \leq K \frac {1-\rho_0^{j\alpha}} {1-\rho_0^\alpha} + K \rho_0^{j\alpha} = K\frac{1-\rho_0^{(j+1)\alpha}} {1-\rho_0^\alpha},
\]
which allows us to close the induction.

Next, we claim $\mu_j$ is a convergent sequence. Indeed, our work above shows $|\mu_{j+1} - \mu_j| \leq \rho_0^{j\alpha} K$ for all $j\geq 1$, which implies
\[
|\mu_{j+k} - \mu_j| \leq  \left(\rho_0^{j\alpha} + \rho_0^{(j+1)\alpha} + \cdots + \rho_0^{(j+k-1)\alpha} \right)K < \frac {\rho_0^{j\alpha} }{1-\rho_0^{\alpha}} K,
\]
for any $k\geq 1$, and therefore $\mu_j$ is a Cauchy sequence. Letting $\mu_0 = \lim_{j\to\infty} \mu_j$, we let $k\to\infty$ in our estimate of $|\mu_{j+k} - \mu_j|$ to obtain $|\mu_0 - \mu_j| \leq \rho_0^{j\alpha}(1-\rho_0^\alpha)^{-1}K$.

Finally, for any $\rho\in (0,\rho_0)$, choose $j>0$ such that $\rho_0^{j+1} < \rho \leq \rho_0^j$. From \eqref{e:induction}, we have
\[ 
\fint_{B_\rho} |v-\mu_0|^2 \dd x \leq  \frac{2 \rho_0^{dj}} {\rho^d}\fint_{B_{\rho_0^j}} |v-\mu_j|^2 \dd x + 2 |\mu_j - \mu_0|^2  \leq 2\left( \frac 1 {\rho_0^d}  + \frac { K^2 }{(1-\rho_0^\alpha)^2}\right)\rho_0^{2j\alpha}\leq C \rho^{2\alpha},
\]
with $C$ depending only on $d$, $\theta$, $\Theta$, $\delta$, and $\eps$. This implies the conclusion of the lemma.
\end{proof}

We are ready to prove our main H\"older estimate:

\begin{theorem}\label{t:holder}
Any minimizer $u$ of $F_{\delta,\eps}$ over $H_0^1(B)$ is locally H\"older continuous in $B$, and for any compact $K\subset B$, there holds 
\[
\|u\|_{C^\alpha(K\cap B)} \leq C \|u\|_{L^2(B)}.
 \]
 The constant $\alpha\in (0,1)$ depends on $d$, $\theta$, and $\Theta$. The constant $C>0$ depends on $d$, $\theta$, $\Theta$, $\delta$, and $\eps$.
\end{theorem}

\begin{proof} 
Let $x_0 \in B$ be fixed. To recenter near $x_0$, we define
\[ 
v(x) = \kappa u(x_0+rx),
\]
with $\kappa, r>0$ chosen so that the hypotheses of Lemma \ref{l:rescaled-est} are satisifed. In particular, letting $M$ be the constant granted by Lemma \ref{l:local-holder}, we take
\[
r:= \min\left(\frac {\mbox{dist}(x_0,\partial B)} 2 , \sqrt{\frac {M} 2}\right),
\]
and 
\[
\kappa :=  \min\left( 1, \sqrt{\frac {1} {\fint_{B_r(x_0)}u^2 \dd x}}\right).
\]
We then have $r^2(1+\kappa^2) \leq M$ and $\fint_{B_1} v^2 \dd x \leq 1$, and Lemma \ref{l:rescaled-est} implies $|v(x) - v(0)| \leq C|x|^\alpha$ whenever $|x|< \rho_0$, where $\alpha$ and $\rho_0$ are the constants granted by Lemma \ref{l:local-holder}, and $C$ depends only on $d$, $\theta$, $\Theta$, $\delta$, and $\eps$. 

Since $x_0\in B$ was arbitrary and $B$ is a bounded set, the proof is complete after translating from $v$ back to $u$.
\end{proof}

%
%
%
%

Now we can prove Theorem \ref{t:main}, our main result. Choosing fixed values of $\delta$ and $\eps$ that are small enough as in Lemma \ref{l:constraint}, and letting $u_*$ be a nonnegative minimizer of $F_{\delta,\eps}$ over $H_{0}^1(B)$ and $\Omega_* = \{u_*>0\}$, we have $\int_B u_*^2 \dd x = 1$ and $\mathcal L^d(\Omega_*) \leq 1$. Theorem \ref{t:holder} implies $u_*$ is continuous in $B$, and therefore $\Omega_*$ is open. The H\"older estimate for $u_*$ depends on $d$, $\theta$, $\Theta$, the distance to $\partial B$, and (via $\eps$) $\mathcal L^d(B)$. From Proposition \ref{p:equiv}, $\Omega_*$ is a minimizer of $\lambda_1^A$ over $\mathcal C'$.

We could have $\mathcal L^d(\Omega_*) < 1$, i.e. $\Omega_*\in \mathcal C' \setminus \mathcal C$. In this case, we define for $r>0$ the open set
\[
\Omega_r = \bigcup_{x\in \Omega_*} B_r(x)\cap B.
\]
By continuity, there is some $r>0$ with $\mathcal L^d(\Omega_r) = 1$, and since $\lambda_1^A$ is nonincreasing with respect to set inclusion, $\Omega_r$ is a minimizer of $\lambda_1^A$ over the class $\mathcal C$. The eigenfunction $u_r$ on $\Omega_r$ is clearly a minimizer of $F_{\delta,\eps}$, and therefore the H\"older estimate of Theorem \ref{t:holder} applies to the optimal eigenfunction in this case as well.

\section{Linear growth at boundary points}\label{s:linear}

This section is devoted to the proof of Theorem \ref{t:boundary}. We choose $\delta$ and $\eps$ small enough that we can apply Proposition \ref{p:equiv}. (These values of $\delta$ and $\eps$ will remain fixed throughout this section.) We let $u= u_*$ be a minimizer of $F_{\delta,\eps}$. Then $u$ is a solution of the eigenvalue equation 
\begin{equation}\label{e:eigeqn}
-\nabla \cdot (A\nabla u) = \lambda_* u
\end{equation}
 in $\Omega_* = \{u>0\}$. By Theorem \ref{t:main}, $\Omega_*$ is open.

The proof in this section is based on rescalings of $u$ that are centered around boundary points with $u(x_0) = 0$. The first step is to revisit the approximation argument of Lemma \ref{l:M} armed with the H\"older estimate of Lemma \ref{l:rescaled-est}:

\begin{lemma}\label{l:Nzero}
Given $\tau>0$, there exists $N = N(d,\theta,\Theta,\lambda_*,\tau)>0$ small enough, such that for any $x_0\in B$ with $u(x_0) = 0$, and any rescaling
\[
v(x) = \kappa u(x_0+rx), \quad x\in B_1,
\]
of $u$, with $\kappa>0$ and $0< r < \dist(x_0, \partial B)$ such that 
\[
r^2(1+\kappa^2) < N\]
and $\fint_{B_1} v^2 \dd x \leq 1$, 
there holds
\[
\sup_{B_{1/8}} v \leq \tau.
\]
\end{lemma}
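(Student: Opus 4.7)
The plan is to argue by contradiction, combining the compactness strategy of Lemma~\ref{l:M} with the H\"older estimate of Lemma~\ref{l:rescaled-est}, using the boundary condition $v(0)=0$ to force the limit to vanish. Suppose the conclusion fails: then there exist $\tau_0>0$ and sequences $(x_n,\kappa_n,r_n)$ with $u(x_n)=0$, $r_n^2(1+\kappa_n^2)\to 0$, $\int_{B_1}v_n^2\,\dd x\le 1$, and $\sup_{B_{1/4}}v_n>\tau_0$, where $v_n(x)=\kappa_n u(x_n+r_nx)$. Lemma~\ref{l:caccioppoli} with $\xi=0$ gives a uniform $H^1(B_{1/2})$ bound, so along a subsequence $v_n\rightharpoonup v_0$ weakly in $H^1$, strongly in $L^2$, and pointwise a.e., with $v_0\ge 0$. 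Repeating the Euler--Lagrange computation of Lemma~\ref{l:M} (the penalty terms $J_2,J_3$ are $O(r_n^2+\kappa_n^2 r_n^2)\to 0$), the limit $v_0$ is weakly $A_0$-harmonic in $B_{1/2}$ for some $A_0\in M_{\theta,\Theta}$ arising as a weak-$L^2$ limit of $A(x_n+r_n\cdot)$.

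Next I would apply Lemma~\ref{l:rescaled-est}: since $r_n^2(1+\kappa_n^2)\le M$ eventually and $\int_{B_1}v_n^2\le 1$, it gives $|v_n(x)-v_n(0)|\le C|x|^\alpha$ on $B_{\rho_0}$ uniformly in $n$. Because $v_n(0)=\kappa_n u(x_n)=0$, this reads $0\le v_n(x)\le C|x|^\alpha$ on $B_{\rho_0}$. Passing to the pointwise-a.e.\ limit and using continuity of $v_0$ (De~Giorgi--Nash) yields $v_0(x)\le C|x|^\alpha$ on $B_{\rho_0}$, in particular $v_0(0)=0$. Since $v_0\ge 0$ is continuous and $A_0$-harmonic on the connected set $B_{1/2}$, a Harnack-chain argument (i.e.\ the strong maximum principle for equations with bounded measurable coefficients, cf.~\cite{gilbargtrudinger}) forces $v_0\equiv 0$ on $B_{1/2}$, hence $v_n\to 0$ in $L^2(B_{1/2})$.

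To close the contradiction, I would pick $y_n\in\overline{B_{1/4}}$ with $v_n(y_n)>\tau_0$ and, for a small parameter $\sigma>0$ to be chosen, re-rescale at $y_n$ by setting
\[
W_n(z):=\sigma^{d/2}v_n(y_n+\sigma z)=(\sigma^{d/2}\kappa_n)\,u\bigl((x_n+r_ny_n)+(r_n\sigma)z\bigr).
\]
A direct change of variables gives $\int_{B_1}W_n^2\dd z=\int_{B_\sigma(y_n)}v_n^2\dd w\le 1$ for $\sigma\le 1/2$, while $(r_n\sigma)^2(1+\sigma^d\kappa_n^2)\le r_n^2(1+\kappa_n^2)\to 0$, so $W_n$ meets the hypotheses of Lemma~\ref{l:rescaled-est}. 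That lemma yields
\[
v_n(y_n+\sigma z)\ge v_n(y_n)-C\sigma^{-d/2}|z|^\alpha\ge\tau_0/2\quad\text{for }|z|\le|z_0|:=\bigl(\tau_0\sigma^{d/2}/(2C)\bigr)^{1/\alpha}.
\]
Taking $\sigma$ small enough that $|z_0|<\rho_0$ (so the H\"older estimate is in force) and $\eta:=\sigma|z_0|\le 1/4$ (so $B_\eta(y_n)\subset B_{1/2}$), I conclude $\int_{B_{1/2}}v_n^2\ge(\tau_0/2)^2\omega_d\eta^d>0$, contradicting $v_n\to 0$ in $L^2(B_{1/2})$.

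The hard part will be this last step: Lemma~\ref{l:rescaled-est} delivers H\"older control only on the fixed small ball $B_{\rho_0}$ around a single base point, while the target conclusion demands smallness on the (potentially larger) set $B_{1/4}$. The resolution is the rescaling invariance of the functional $F_{\kappa,\xi,r}$: recentering at a candidate ``bad'' point $y_n$ with amplitude $\sigma^{d/2}$ tuned to the $L^2$-normalization transports the localized H\"older control from the origin to the new base point and upgrades the qualitative limit $v_0\equiv 0$ into a quantitative $L^2$-concentration estimate that conflicts with strong $L^2$ convergence.
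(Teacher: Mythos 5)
Your proposal is correct, and it follows the paper's overall compactness strategy (blow-up sequence, Caccioppoli, passage to an $A_0$-harmonic limit $v_0$, strong maximum principle forcing $v_0\equiv 0$). The difference lies entirely in how you close the contradiction. The paper applies the $C^\alpha$ estimate of Lemma \ref{l:rescaled-est} to each $v_n$ to conclude (after a further subsequence) that $v_n\to v_0$ \emph{uniformly} on $B_{1/2}$; then $v_0\equiv 0$ immediately gives $\sup_{B_{1/4}}v_n\le\tau$ for large $n$, contradicting the assumption. You, worried that Lemma \ref{l:rescaled-est} only delivers H\"older control near the single base point, instead rely only on strong $L^2(B_{1/2})$ convergence to $v_0\equiv 0$, and then re-rescale at a ``bad'' point $y_n\in\overline{B_{1/4}}$ with $v_n(y_n)>\tau_0$ to convert the pointwise lower bound into a uniform-in-$n$ lower bound on $\int_{B_{1/2}}v_n^2$, contradicting the $L^2$ convergence. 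This is a valid alternative, and the book-keeping you carry out (the scaling of the $L^2$ mass, the parameter constraint $(r_n\sigma)^2(1+\sigma^d\kappa_n^2)\le r_n^2(1+\kappa_n^2)$, the choice of $\sigma$ so that $|z_0|<\rho_0$ and $\eta\le 1/4$) is correct.

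One remark: the ``hard part'' you identify is in fact not an obstacle for the paper's route. Lemma \ref{l:rescaled-est} can be applied not just at the origin but at any interior base point $y\in B_{1/2}$: recentering $v_n$ at $y$ with a fixed radius factor (say $1/4$) and a fixed amplitude renormalization keeps $\int_{B_1}$ of the new rescaling $\le 1$ (since $B_{1/4}(y)\subset B_1$) and makes the parameter $r^2(1+\kappa^2)$ strictly smaller. This gives a H\"older modulus for $v_n$ near every point of $B_{1/2}$ with a constant depending only on $d,\theta,\Theta,\delta,\eps$, hence equicontinuity and uniform convergence on $\overline{B_{1/4}}$ via Arzel\`a--Ascoli. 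This is exactly the mechanism the paper uses, implicitly; it is slightly more direct than your $L^2$-concentration argument, but both are sound.
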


\begin{proof}
Since $u$ satisfies \eqref{e:eigeqn}, a direct calculation shows that $v$ satisfies
\begin{equation}\label{e:rescaled-eig}
-\nabla \cdot (A_0(x) \nabla v) = r^2 \lambda_*v, \quad x\in B_1\cap \{v>0\},
\end{equation}
where $A_0(x) = A(x_0+rx)$. Next, let $h$ be the $A_0$-harmonic lifting of $v$ in $B_{1/2}$, i.e. the weak solution of
\begin{equation}\label{e:h-eqn}
\begin{split}
\nabla \cdot (A_0(x) \nabla h) &= 0, \quad x\in B_{1/2},\\
h - u  &\in H_0^1(B_{1/2}).
\end{split}
\end{equation}
Let us choose $N$ small enough so that, with $r^2(1+\kappa^2)< N$, (i) the H\"older estimate of Lemma \ref{l:rescaled-est} applies, and (ii) from Lemma \ref{l:M} with $\xi=0$, the inequality 
\begin{equation}\label{e:tau-prime}
\fint_{B_{1/2}}|v - h|^2 \dd x \leq \tau'
\end{equation}
holds, for some $\tau'$ to be determined below. 

The strategy of the proof is as follows: first, show $h$ is small near $x=0$, and secondly, leverage \eqref{e:tau-prime} to conclude $v$ is also small near $x=0$. 

To establish the smallness of $h$, we apply the Harnack inequality \cite[Theorem 8.20]{gilbargtrudinger} followed by an $L^2$-to-$L^\infty$ estimate \cite[Theorem 8.17]{gilbargtrudinger} to obtain, for any $\rho\in (0,\frac 1 2)$,
\begin{equation}\label{e:sup1}
\sup_{B_{1/4}} h \leq C_1 h(0) \leq C_2 \rho^{-d/2} \|h\|_{L^2(B_\rho)},
\end{equation}
where $C_1$ and $C_2$ depend only on $d$, $\theta$, and $\Theta$. Next, proceeding in a similar manner to the proof of Lemma \ref{l:local-holder}, we have
\begin{equation}\label{e:sup2}
\begin{split}
\rho^{-d} \|h\|_{L^2(B_\rho)}^2 &= \omega_d \fint_{B_\rho} h^2 \dd x \\
&\leq 2\omega_d\left( \fint_{B_\rho} |v-h|^2 \dd x + \fint_{B_\rho} v^2 \dd x\right)\\
&\leq 2^{1-d} \rho^{-d} \omega_d \tau' + 2\omega_d \fint_{B_\rho} v^2 \dd x,
\end{split}
\end{equation}
by \eqref{e:tau-prime}. Next, we bound the last term on the right in \eqref{e:sup2} by using the H\"older estimate of Lemma \ref{l:rescaled-est} along with the fact that $v(0) = u(x_0) = 0$ to obtain
\[
\fint_{B_\rho} v^2 \dd x = \fint_{B_\rho} |v(x) - v(0)|^2 \dd x \leq C^2 \rho^{2\alpha}, 
\]
if $\rho< \rho_0$, where $C$, $\alpha$, and $\rho_0$ are the constants from Lemma \ref{l:rescaled-est}. We conclude
\begin{equation}\label{e:h-bound}
\sup_{B_{1/4}} h \leq C_2 \left( 2^{1-d} \rho^{-d} \omega_d \tau' + 2\omega_d C^2 \rho^{2\alpha}\right)^{1/2}.
\end{equation}

To bound $\sup_{B_{1/4}} v$, we need to apply the local $L^2$-to-$L^\infty$ estimate at the boundary, provided by \cite[Theorem 8.25]{gilbargtrudinger}:
\[
\sup_{B_{1/8}\cap\{v>0\}} v \leq C_3 \|v\|_{L^2(B_{1/4})}.
\]
 It is important to note that this estimate requires no smoothness for the boundary of $\{v>0\}$, and the constant $C_3$ depends only on $d$, $\theta$, $\Theta$, and $\lambda_*$ (since $v$ satisfies the elliptic equation \eqref{e:rescaled-eig} with zeroth-order coefficient $r^2 \lambda_*$, and we can ensure $r\leq 1$ by choosing $N\leq 1$.).

We then have
\[
\begin{split}
\sup_{B_{1/8}\cap \{v>0\}} v &\leq C_3 \left(\|v-h\|_{L^2(B_{1/4})} + \|h\|_{L^2(B_{1/4})}\right)\\
&\leq C_3 2^{d/2} \sqrt{\tau'} + C_3 \omega_d 4^{-d} \left(2^{1-d} \rho^{-d} \omega_d \tau' + 2 \omega_d C^2 \rho^{2\alpha}\right)^{1/2}
\end{split}
\]
by \eqref{e:tau-prime} and \eqref{e:h-bound}. Choosing $\rho< \rho_0$ small enough depending on $\tau$, $\alpha$, $C$, $C_3$, and $\omega_d$, and then choosing $\tau'$ depending on $\tau$, $\rho$, $d$, $C_3$, and $\omega_d$, we can ensure
\[
\sup_{B_{1/8}} v = \sup_{B_{1/8} \cap \{v>0\}} v \leq \tau.
\]
Tracing the dependence of all quantities, we see that $\tau'$ can be chosen depending only on $\tau$, $d$, $\theta$, $\Theta$, and $\lambda_*$. This $\tau'$ determines the choice of $N$ in the statement of the lemma, and the proof is complete.
\end{proof}

Finally, by applying Lemma \ref{l:Nzero} iteratively, we obtain the desired linear growth estimate:


\begin{proof}[Proof of Theorem \ref{t:boundary}]
We prove the growth estimate for any minimizer $u$ of $F_{\delta,\eps}$. The statement of the theorem then follows by choosing $\delta$ and $\eps$ small enough so that $u_*$ is a minimizer of $F_{\delta,\eps}$. 

Let $\tau = \frac 1 8$, and let $N$ be the corresponding constant granted by Lemma \ref{l:Nzero}. Given $x_0\in \partial\{u>0\}$, we recenter around $x_0$  in a similar fashion to the proof of Theorem \ref{t:holder}, by defining
\[ 
v = \kappa u(x_0+rx).
\]
where
\[
r:= \min\left(\frac {\mbox{dist}(x_0,\partial B)} 2 , \sqrt{\frac {N} 2}\right),
\]
and
 \[
\kappa :=  \min\left( 1, \sqrt{\frac {1} {\fint_{B_r(x_0)}u^2 \dd x}}\right).
\]
Then, $r^2(1+\kappa^2) < N$ and $\fint_{B_1} v^2 \dd x \leq 1$.

We would like to show by induction that 
\begin{equation}\label{e:induction2}
\sup_{B_{8^{-j}}} v \leq \frac 1 {8^j}, \quad j=1,2,\ldots,
\end{equation}
The base case follows from applying Lemma \ref{l:Nzero} directly to $v$. If \eqref{e:induction2} holds for some $j\geq 1$, then we define
\[
\begin{split}
v_j(x) &= 8^j v(8^{-j} x)\\
&= 8^j \kappa u(x_0+8^{-j}rx), \quad x\in B_1.
\end{split}
\]
From the inductive hypothesis, we have 
\[
\fint_{B_1} v_j^2 \dd x = 8^{2j} \fint_{B_{8^{-j}}} v^2 \dd x \leq 1.
\]
We also clearly have $v_j(0) = 0$. With $\kappa_j = 8^j\kappa$ and $r_j = 8^{-j} r$, we have $r_j^2(1+\kappa_j^2) = r^2(4^{-2j} + \kappa^2) < N$. Therefore, Lemma \ref{l:Nzero} implies 
\[
\sup_{B_{1/8}} v_j \leq \frac 1 8,
\]
or 
\[
\sup_{B_{8^{-j-1}}} v \leq 8^{-j-1},
\]
and we have established \eqref{e:induction2}.

Next, for any $x\in B_{1/8}$, there is a $j\geq 1$ such that $8^{-j-1} < |x| \leq 8^{-j}$. From \eqref{e:induction2}, we have
\[
|v(x)|\leq \sup_{B_{8^{-j}}} w \leq 8^{-j} \leq 8 |x|,
\]
and the proof is complete after translating from $v$ back to $u$.
\end{proof}

\bibliographystyle{abbrv}
\bibliography{eigenvalue}

\end{document}